\documentclass[12pt]{article}

\usepackage[margin=1in]{geometry}
\usepackage{amsmath,amssymb,amsthm,mathtools}
\usepackage{enumitem}
\usepackage[colorlinks=true,linkcolor=blue,citecolor=blue,urlcolor=blue]{hyperref}
\newtheorem{theorem}{Theorem}
\newtheorem{lemma}[theorem]{Lemma}
\newtheorem{proposition}[theorem]{Proposition}

\DeclareMathOperator{\supp}{supp}

\newcommand{\E}{\mathbb E}
\newcommand{\Prob}{\mathbb P}

\newcommand{\R}{\mathbb R}
\newcommand{\one}{\mathbf 1}

\date{}
\title{Return Probability for the Switch--Walk--Switch Lamplighter Walk on a Regular Tree}

\author{ \large Chenyang An\thanks{UCSD. Email: cya.portfolio@gmail.com.} \ \ \ %
Minghao Pan\thanks{Caltech. Email: mpan2@caltech.edu.} }
\begin{document}
\maketitle
\begin{abstract}
We derive the sharp return-probability asymptotic for the switch--walk--switch lamplighter walk with lamp group \(\mathbb Z_2\) over the infinite \(d\)-regular tree:
\[
        p_{2n}(e,e)
        =
        \rho_d^{2n}
        \exp\left[
        -\left(\pi^2(\log(d-1))^2+o(1)\right)
        \frac{n}{\log^2 n}
        \right].
\]
The proofs were generated by QED, a multi-agent system co-developed by the authors, without human intervention beyond the specification of the problem. This provides a test case for the ability of AI systems to produce rigorous mathematical proofs.
\end{abstract}

\section{Introduction}

Let \(T_d\) be the infinite \(d\)-regular tree, where \(d\ge 3\), and
consider the switch--walk--switch lamplighter walk with lamps in
\(\mathbb Z_2\) over \(T_d\). Starting from the all-off configuration with
the lamplighter at a root \(o\), one step of the walk consists of resampling
the lamp at the current vertex, moving the lamplighter to a uniformly chosen
neighbor, and then resampling the lamp at the new vertex. Let
\[
        p_n(e,e)
\]
denote the probability that the walk returns to the identity state after
\(n\) steps.

The main result of this paper identifies the sharp logarithmic
correction to the exponential decay of the return probabilities.

\begin{theorem}\label{thm:main}
Let \(d\ge 3\), let \(T_d\) be the infinite \(d\)-regular tree, and let
\(p_n(e,e)\) be the return probability of the switch--walk--switch
lamplighter walk over \(T_d\) with lamp group \(\mathbb Z_2\). Then, as
\(n\to\infty\),
\[
        p_{2n}(e,e)
        =
        \rho_d^{2n}
        \exp\left[
        -\left(\pi^2(\log(d-1))^2+o(1)\right)
        \frac{n}{\log^2 n}
        \right],
\]
where
\[
        \rho_d=\frac{2\sqrt{d-1}}{d}
\]
is the spectral radius of simple random walk on $T_d$.
\end{theorem}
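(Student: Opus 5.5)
The plan is to reduce the return probability to a range functional of simple random walk, and then match a confinement lower bound with a Faber--Krahn-type upper bound on subtrees.

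\textbf{Reduction to a range functional.} Let \((X_k)\) be simple random walk on \(T_d\) started at \(o\), and let \(R_{2n}=\{X_0,\dots,X_{2n}\}\) be its range. In the switch--walk--switch walk every visited vertex has its lamp resampled at least once, and the last resampling is a fair coin independent of the path. So, conditionally on the path, the final lamp configuration is uniform on \(\{0,1\}^{R_{2n}}\) and vanishes off \(R_{2n}\). This gives
\[
p_{2n}(e,e)=\E\bigl[2^{-|R_{2n}|}\,\one\{X_{2n}=o\}\bigr],
\]
and the theorem becomes a Donsker--Varadhan-type estimate for the tree walk.

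The heuristic fixes the constant. The distance \(|X_k|\) is a birth--death chain, and its transition matrix is similar to a symmetric one. Killed outside the ball \(B_r\), it has top eigenvalue \(\rho_d\cos(\pi/(r+O(1)))\). Hence staying in \(B_r\) for \(2n\) steps costs \(\rho_d^{2n}\exp(-(1+o(1))\pi^2 n/r^2)\), while the lamps cost at most \(2^{-|B_r|}=\exp(-O((d-1)^r))\). We balance these with \(r=(\log n-3\log\log n)/\log(d-1)\). Then \((d-1)^r=n/\log^3 n\) is negligible, and \(\pi^2n/r^2=(\pi^2\log^2(d-1)+o(1))\,n/\log^2 n\).

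\textbf{Lower bound.} Restrict to paths staying in \(B_r\), so that \(|R_{2n}|\le|B_r|\). Let \(\phi\) be the radial positive Dirichlet ground state of \(P\) on \(B_r\), with eigenvalue \(\lambda_r=\rho_d\cos(\pi/(r+c))\) for a constant \(c=c(d)\); it is explicit after the similarity transform \((d-1)^{-|x|/2}\). We will show
\[
\Prob(X_{2n}=o,\ X_k\in B_r\ \forall k\le 2n)\ \ge\ \lambda_r^{2n}\cdot(\text{factor }e^{-O(r)}\text{ or }|B_r|^{-O(1)}).
\]
To get this, write the killed kernel spectrally and use that \(\phi(o)^2/\|\phi\|^2\) is at least polynomially small in \(|B_r|\). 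Alternatively, apply the ratio limit for the killed chain with a parity argument, since \(B_r\) is bipartite and \(-\lambda_r\) is also an eigenvalue, which the even time \(2n\) absorbs. All correction factors are \(\exp(O(\log n))\), which is negligible.

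\textbf{Upper bound.} This is the main obstacle. Fix \(\delta>0\) and split on \(m=|R_{2n}|\).

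For \(m\ge n^{1-\delta}\), bound trivially: the contribution is at most \(2^{-n^{1-\delta}}\), which is much smaller than \(\rho_d^{2n}e^{-Cn/\log^2n}\).

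For \(m<n^{1-\delta}\), \(R_{2n}\) is a subtree \(S\ni o\) with \(|S|=m\). There are at most \(C_d^m=\exp(O(n^{1-\delta}))\) such subtrees, and this entropy is negligible. For each \(S\),
\[
\Prob(X_{2n}=o,\ R_{2n}\subset S)\le \lambda(S)^{2n},
\]
where \(\lambda(S)\) is the top eigenvalue of \(P\) killed off \(S\); this is symmetric, since \(P\) is symmetric on the regular tree. Everything then rests on the following key lemma.

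\emph{Key lemma (Faber--Krahn on \(T_d\)).} For every finite subtree \(S\) with \(|S|=m\),
\[
\lambda(S)\le\rho_d\Bigl(1-\frac{(1+o(1))\,\pi^2\log^2(d-1)}{2\log^2 m}\Bigr).
\]
With \(m\le n^{1-\delta}\), the lemma gives the rate \(\pi^2\log^2(d-1)\,n/((1-\delta)^2\log^2 n)\). Letting \(\delta\to0\) then closes the proof.

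To prove the lemma, I would first ground-state transform with the \(\rho_d\)-harmonic-type function \(h(x)=(d-1)^{-|x-v|/2}\) (or the Busemann version \((d-1)^{-b(x)/2}\) along a fixed end). This turns \(\rho_d-P\) into a Dirichlet form whose cost is essentially a one-dimensional Laplacian along horocycle levels. I would then bound the Rayleigh quotient of any \(f\) supported on \(S\) by a one-dimensional Dirichlet problem. The "effective length" of \(S\) in the Busemann direction, measured in the weighted sense, is at most \(\log_{d-1}m+O(\log\log m)\), because a subtree of size \(m\) cannot carry mass spread over more horocycle levels without exponential volume. The delicate point is making this rigorous for non-ball shapes, such as long thin paths, where the transform must be adapted. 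Paths are harmless, since there \(\lambda\le (2/d)\) times a cosine, which is strictly less than \(\rho_d\). The genuine difficulty is controlling mixed shapes uniformly. If the direct approach fails, I would fall back on symmetrization: rearrange a subtree into a "ball-like" subtree with no larger \(\lambda\) (a tree version of Pólya--Szeg\H{o}), and then compute the ball's eigenvalue explicitly.
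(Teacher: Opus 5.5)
Your lower bound is fine, and it is the paper's trap argument in different form. The identity $p_{2n}(e,e)=\E[2^{-|R_{2n}|}\one\{X_{2n}=o\}]$ is just the path expansion of $\tau(P_\omega^{2n})$, and using the full ball instead of the forward $b$-ary tree changes nothing.

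\textbf{The gap: the range split in the upper bound.} For $|R_{2n}|\ge n^{1-\delta}$ the trivial bound is $2^{-n^{1-\delta}}\Prob(X_{2n}=o)\le\rho_d^{2n}2^{-n^{1-\delta}}$. Without the factor $\rho_d^{2n}$ your comparison is false outright. With it, the bound is still exponentially \emph{larger} than the target $\rho_d^{2n}e^{-Cn/\log^2 n}$, because $n^{1-\delta}=o(n/\log^2 n)$. As written, you only get $p_{2n}(e,e)\le\rho_d^{2n}e^{-c\,n^{1-\delta}}$.

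Moving the threshold does not help. The trivial bound needs $m\log 2\ge\pi^2(\log b)^2\,n/\log^2 n$. Meanwhile the number of subtrees of size $m$ containing $o$ grows like $C_d^m$ with $C_d=b^b/(b-1)^{b-1}\ge 4>2$. So even after spending the lamp factor, the entropy $(C_d/2)^m$ is $\exp(\Theta(n/\log^2 n))$ when $m\asymp n/\log^2 n$. Balancing the two bounds gives only the rate $\frac{\log 2}{\log C_d}\,\pi^2(\log b)^2$: the right order, but the wrong constant. This window cannot be dismissed. Ball-like ranges of depth $\log_b n-O(\log\log n)$ have size in this window and already have confinement cost $(\pi^2(\log b)^2+o(1))\,n/\log^2 n$.

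\textbf{The missing idea is localization.} You must avoid paying for the global shape of the range or cluster at all. The paper works with $\tau(P_\omega^{2n})$ and combines three ingredients.
\begin{enumerate}[label=(\roman*)]
\item A local Faber--Krahn estimate: if every $r_\delta$-ball of a subtree has fewer than $ab^{r_\delta}$ vertices, with $r_\delta\approx(\pi/\sqrt2)\,\delta^{-1/2}$, then the spectral top is below $\rho_d(1-\delta)$.
\item Rarity of dense balls: $\Prob(|C_\omega(o)\cap B(o,r)|\ge ab^r)\le e^{-cb^r}$.
\item The trace inequality $\tau(\one_{[E,\infty)}(P_\omega))\le\Prob(o\in W_\delta)$, obtained from $\Pi\wedge(1-Q)=0$ after deleting the set $W_\delta$ of dense-ball centres.
\end{enumerate}
Step (iii) charges only a per-site probability, with no entropy factor. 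It also handles infinite clusters, which occur for $d\ge 4$ since then $p=1/2>p_c$ and there a size-based bound says nothing.

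Separately, your key lemma is only sketched; as you acknowledge, the Busemann reduction does not cover mixed shapes. The statement itself is true: it is the case $m<ab^{r_\delta}$ of the paper's sparse-ball proposition. The paper proves that proposition through the weighted shift $UA_TU^{-1}=\sqrt b\,(S+S^*)$, using $\|S^k\|^2=\sup_x D_k(x)/b^k$ and $\|S+S^*\|=2w(S)$. It then applies a numerical-radius lemma (Sz.-Nagy dilation plus the Ehm--Gneiting--Richards variance bound), which produces the sharp constant $\pi^2/2$. But even granting the lemma, your upper bound does not close, for the reason above.
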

The starting point is the standard lamplighter--percolation representation
for switch--walk--switch walks; see
Lehner--Neuhauser--Woess~\cite{LNWW}. Let \(\omega\) be Bernoulli site
percolation on \(T_d\) with parameter \(1/2\), and let \(P_\omega\) be the
simple-random-walk transition operator killed on closed vertices:
\[
        P_\omega(x,y)
        =
        \frac1d\mathbf 1_{\{x\sim y\}}
        \mathbf 1_{\{\omega(x)=1\}}
        \mathbf 1_{\{\omega(y)=1\}} .
\]
With the canonical equivariant trace
\[
        \tau(F)=\mathbb E\langle\delta_o,F_\omega\delta_o\rangle,
\]
the return probability satisfies
\[
        p_m(e,e)=\tau(P_\omega^m),\qquad m\ge 0.
\]
Thus, defining the averaged spectral measure
\[
        \nu_d(B)=\tau\bigl(\mathbf 1_B(P_\omega)\bigr),
\]
we have \(\operatorname{supp}\nu_d\subseteq[-\rho_d,\rho_d]\) and
\[
        p_{2n}(e,e)
        =
        \int_{[-\rho_d,\rho_d]}\lambda^{2n}\,d\nu_d(\lambda).
\]
The problem is therefore reduced to estimating the spectral mass of
\(\nu_d\) near the two edges \(\pm\rho_d\).\\

The lower bound comes from finite traps. Let $b:=d-1$. With probability
\(\exp(-O(b^r))\), the percolation cluster of the root contains a forward
\((b)\)-ary tree of depth \(r\). The killed transition operator on this
finite tree has an eigenvalue
\[
        \rho_d\cos\frac{\pi}{r+2}
\]
whose normalized eigenfunction has only polynomially small mass at the
root. Taking \(r\sim \log_{b} n\) balances the probability cost of opening
the trap with the spectral gain, and gives
\[
        p_{2n}(e,e)
        \ge
        \rho_d^{2n}
        \exp\left[
        -\left(\pi^2(\log b)^2+o(1)\right)
        \frac{n}{\log^2 n}
        \right].
\]

The upper bound proves that no substantially better traps exist. The main
deterministic input is a sparse-ball spectral estimate for subtrees of
\(T_d\): if every ball of radius \(r\) inside a subtree contains fewer than
\(a b^r\) vertices, then the killed transition operator has spectral top
strictly below \(\rho_d(1-\delta)\) when \(r\approx(\pi/\sqrt2)\delta^{-1/2}\).
 The probabilistic input is that
dense percolation balls are rare:
\[
        \mathbb P\bigl(|C_\omega(o)\cap B(o,r)|\ge a b^r\bigr)
        \le
        \exp(-c b^r),
\]
where $C_\omega(o)$ is the cluster of the root. 
A finite-von-Neumann-algebra projection comparison then turns this witness
estimate into a double-exponential spectral-edge bound for \(\nu_d\). 
Optimizing the resulting spectral integral at
\(\delta\asymp(\log n)^{-2}\) gives the matching upper bound.

The spectral theory of lamplighter groups has a substantial history. For the
classical lamplighter group over \(\mathbb Z\), spectral measures were
computed by Grigorchuk--\.Zuk~\cite{GrigorchukZuk2001} and further
interpreted by Dicks--Schick~\cite{DicksSchick2002}. Our argument is also related in spirit
to the spectral analysis of random graphs and percolation operators \cite{KirschMueller2006,MuellerStollmann2007,KhorunzhiyKirschMueller2006}.

\subsection*{Cost}
The total AI cost for this work was approximately 1000 US dollars, all spent on API calls. The total AI runtime for these problems was less than 24 hours.

\subsection*{Statement of AI use}
The authors formulated the mathematical problem studied in this paper. The proof was independently generated by QED without human guidance. QED is a multi-agent AI system developed by the
authors together with Qihao Ye and Jiayun Zhang~\cite{QED}. QED decomposes a
problem into intermediate estimates, generates candidate arguments, and
performs automated critique and revision. The authors independently checked the proof and revised the exposition. The authors take full
responsibility for the content of this paper.

\section{Setup and notation}

Throughout the paper \(d\ge 3\) is fixed. Let \(T_d=(V,E)\) be the infinite
\(d\)-regular tree, and fix a root \(o\in V\). We write
\[
        b=d-1,
        \qquad
        \rho_d=\frac{2\sqrt b}{d}.
\]
For \(x\in V\) and \(r\ge 0\), let
\[
        B(x,r)=\{y\in V:d_{T_d}(x,y)\le r\},
        \qquad
        \partial B(x,r)=\{y\in V:d_{T_d}(x,y)=r\}.
\]

Let
\[
        \mathcal L_d=V\times \mathbb Z_2^{(V)}
\]
be the lamplighter state space over \(T_d\), where
\(\mathbb Z_2^{(V)}\) denotes the set of finitely supported lamp
configurations. We write an element as \((x,\eta)\), where
 \(x\in V\) is the lamplighter position and \(\eta:V\to\mathbb Z_2\) is finitely supported. Let 
\[
        e=(o,\mathbf 0)
\]
denote the identity state. 

The switch--walk--switch transition on $\mathcal L_d$ is defined as follows. From
\((x,\eta)\), first resample the lamp at \(x\) from the uniform measure on
\(\mathbb Z_2\), then move from \(x\) to a uniformly chosen neighbor \(y\),
and finally resample the lamp at \(y\) from the uniform measure on
\(\mathbb Z_2\), independently of all previous choices. Let
\[
        p_n(e,e)
\]
be the \(n\)-step return probability to \(e\).

\subsection{Percolation and the killed transition operator}

Let
\[
        \Omega=\{0,1\}^V
\]
with product Bernoulli measure of parameter \(1/2\). A vertex \(x\) is called
open in \(\omega\) if \(\omega(x)=1\). For a configuration \(\omega\), define
the killed transition operator \(P_\omega\) on \(\ell^2(V)\) by
\[
        P_\omega(x,y)
        =
        \frac1d
        \mathbf 1_{\{x\sim y\}}
        \mathbf 1_{\{\omega(x)=1\}}
        \mathbf 1_{\{\omega(y)=1\}} .
\]
Equivalently, \(P_\omega\) is the simple-random-walk transition operator on
the open subgraph, extended by zero on closed vertices. Since \(P_\omega\) is a compression of \(d^{-1}A_{T_d}\),
where \(A_{T_d}\) is the adjacency operator of \(T_d\), and
\[
        \|A_{T_d}\|=2\sqrt b,
\]
we have
\[
        \|P_\omega\|\le \rho_d.
\]

For \(x\in V\), let \(C_\omega(x)\) denote the open cluster of \(x\), with the
convention that \(C_\omega(x)=\varnothing\) if \(x\) is closed.

\subsection{Equivariant random operators and trace}

Let \(\operatorname{Aut}(T_d)\) be the automorphism group of \(T_d\). For
\(g\in\operatorname{Aut}(T_d)\), let \(g\omega\) be the shifted percolation
configuration and let \(U_g\) be the unitary operator on \(\ell^2(V)\)
defined by
\[
        U_g\delta_x=\delta_{gx}.
\]
A bounded random operator \(F=(F_\omega)_\omega\) on \(\ell^2(V)\) is called
equivariant if
\[
        F_{g\omega}=U_gF_\omega U_g^{-1}
        \qquad
        \text{for every }g\in\operatorname{Aut}(T_d).
\]
Let \(\mathcal N\) denote the von Neumann algebra of essentially bounded
equivariant random operators, modulo almost-sure equality. It carries the
canonical trace
\[
        \tau(F)
        =
        \mathbb E_\omega\langle \delta_o,F_\omega\delta_o\rangle .
\]
This trace is faithful, normal, and tracial.

We shall use the standard lattice notation for projections in
\(\mathcal N\). For projections \(p,q\in\mathcal N\), write \(p\le q\) if
\(pq=p\). The meet \(p\wedge q\) is the largest projection dominated by both
\(p\) and \(q\), and the join \(p\vee q\) is the smallest projection
dominating both. The following lemma will be useful in the latter proof. 
\begin{lemma}\label{lm: von Neumann}
 If \(p,q\in \mathcal N\) are projections and
\[
        p\wedge (1-q)=0,
\]
then
\[
        \tau(p)\le \tau(q).
\]
\end{lemma}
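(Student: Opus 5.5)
The plan is to use the Kaplansky parallelogram law for projections in a finite von Neumann algebra: for any projections \(p,e\in\mathcal N\),
\[
        p\vee e-e\sim p-p\wedge e ,
\]
where \(\sim\) denotes Murray--von Neumann equivalence. Since \(\tau\) is tracial, equivalent projections have equal trace. This gives
\[
        \tau(p\vee e)-\tau(e)=\tau(p)-\tau(p\wedge e).
\]
Apply this with \(e=1-q\). By hypothesis \(p\wedge(1-q)=0\), so
\[
        \tau(p)=\tau\bigl(p\vee(1-q)\bigr)-\tau(1-q)\le \tau(1)-\tau(1-q)=\tau(q),
\]
because \(p\vee(1-q)\le 1\) and \(\tau\) is positive. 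Note that \(\tau(1)=1<\infty\), so all of this arithmetic is legitimate.

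The only step needing justification is the parallelogram law, which I would prove via polar decomposition. Let \(x=(1-e)p\in\mathcal N\). Then:
\begin{itemize}
        \item The right support of \(x\), i.e.\ the projection onto \((\ker x)^\perp\), is \(p-p\wedge e\). Indeed, for \(\xi\) in the range of \(p\), \((1-e)p\xi=0\) iff \(\xi\in\operatorname{ran}p\cap\operatorname{ran}e\); vectors in \(\ker p\) are killed anyway. So \(\ker x=\ker p\oplus\operatorname{ran}(p\wedge e)\).
        \item The left support of \(x\) is the closure of the range of \((1-e)p\), which is \(p\vee e-e\). This is because \(\overline{\operatorname{ran}}\,(1-e)p\) equals the orthogonal complement of \(\operatorname{ran}e\) inside \(\overline{\operatorname{ran}p+\operatorname{ran}e}\).
\end{itemize}
The partial isometry \(v\) in the polar decomposition \(x=v|x|\) lies in \(\mathcal N\), since von Neumann algebras are closed under polar decomposition. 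It satisfies \(v^*v=p-p\wedge e\) and \(vv^*=p\vee e-e\), which is the claimed equivalence. Then \(\tau(v^*v)=\tau(vv^*)\) by traciality.

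This is standard, and I expect no real obstacle. The mild point to check is that the lattice operations and polar decomposition are performed pointwise in \(\omega\) and preserve equivariance, so everything stays in \(\mathcal N\). Measurability and equivariance of \(\omega\mapsto v_\omega\) follow from uniqueness of the polar decomposition together with \(U_g\)-conjugation invariance.

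Alternatively, one can simply invoke the abstract fact that \(\mathcal N\) is a von Neumann algebra, as stated in the setup, and cite the parallelogram law from Kadison--Ringrose.
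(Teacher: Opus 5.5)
Your proof is correct and is essentially the paper's argument: the paper applies the dimension formula $\tau(e\vee f)+\tau(e\wedge f)=\tau(e)+\tau(f)$ with $e=p$, $f=1-q$, and this formula is exactly the trace form of the Kaplansky parallelogram law you use with $e=1-q$. The only difference is that you also sketch the polar-decomposition proof of that law, correctly identifying the right and left supports of $(1-e)p$ as $p-p\wedge e$ and $p\vee e-e$, whereas the paper simply cites the formula as standard.
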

\begin{proof}
Using the standard dimension formula for projections in a finite von
Neumann algebra,
\[
        \tau(e\vee f)+\tau(e\wedge f)=\tau(e)+\tau(f),
\]
with \(e=p\) and \(f=1-q\), we get
\[
        \tau(p)+\tau(1-q)
        =
        \tau(p\vee(1-q))+\tau(p\wedge(1-q)).
\]
Since \(p\wedge(1-q)=0\),
\[
        \tau(p)+\tau(1-q)
        =
        \tau(p\vee(1-q))
        \le
        \tau(1)=1.
\]
Therefore
\[
        \tau(p)\le 1-\tau(1-q)=\tau(q).
\]
\end{proof}
\subsection{The averaged spectral measure}

For a Borel set \(B\subseteq\mathbb R\), let
\[
        \mathbf 1_B(P_\omega)
\]
denote the spectral projection of the self-adjoint operator \(P_\omega\)
associated with \(B\). Define a measure \(\nu_d\) by
\begin{equation}\label{eq: def of nu}
            \nu_d(B)
        =
        \tau\bigl(\mathbf 1_B(P_\omega)\bigr)
        =
        \mathbb E\langle \delta_o,\mathbf 1_B(P_\omega)\delta_o\rangle .
\end{equation}
For each $\omega$, the map
\[
        B\longmapsto \langle\delta_o,\one_B(P_\omega)\delta_o\rangle
\]
 is a probability measure. Thus $\nu_d$, being its expectation, is also a probability measure. Since \(\|P_\omega\|\le \rho_d\), the measure \(\nu_d\) is supported on
\([-\rho_d,\rho_d]\).

The lamplighter--percolation identity gives, for every \(m\ge 0\),
\[
        p_m(e,e)
        =
        \tau(P_\omega^m).
\]
Consequently, by the spectral theorem,
\begin{equation}\label{eq: return_spectrum}
            p_{2n}(e,e)
        =
        \tau(P_\omega^{2n})
        =
        \int_{[-\rho_d,\rho_d]}\lambda^{2n}\,d\nu_d(\lambda).
\end{equation}
This equation is the starting point for both the lower and upper
bounds.

\section{The lower bound}

\begin{lemma}\label{lem:trap}
For every integer $r\ge1$ and every $n\ge1$,
\[
        p_{2n}(e,e)
        \ge
        8(r+2)^{-3}
        2^{- 3b^{r+1}}
        \left(\rho_d\cos\frac{\pi}{r+2}\right)^{2n}.
\]

\end{lemma}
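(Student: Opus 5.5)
The plan is to restrict the trace identity \eqref{eq: return_spectrum} to a single explicit percolation event on which the cluster of the root is exactly a finite tree whose spectrum we can compute. Since \(P_\omega^{2n}=(P_\omega^{n})^2\) is a positive operator, \(\langle\delta_o,P_\omega^{2n}\delta_o\rangle\ge 0\) for every \(\omega\). Hence for any event \(A\),
\[
p_{2n}(e,e)\ge \mathbb E\bigl[\langle\delta_o,P_\omega^{2n}\delta_o\rangle\,\mathbf 1_A\bigr].
\]
It therefore suffices to find an event \(A\) with \(\mathbb P(A)\ge 2^{-3b^{r+1}}\) on which \(\langle\delta_o,P_\omega^{2n}\delta_o\rangle\ge 8(r+2)^{-3}(\rho_d\cos\frac{\pi}{r+2})^{2n}\).

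\textbf{The trap event.} Fix one neighbor \(u\) of \(o\). Let \(S\) be the forward \(b\)-ary tree of depth \(r\) rooted at \(o\): the vertices reachable from \(o\) by non-backtracking paths of length at most \(r\) whose first step avoids \(u\). In \(S\), the root has \(b=d-1\) children and every non-leaf vertex has \(b\) children. Let \(A\) be the event that every vertex of \(S\) is open and every vertex of its outer boundary \(\partial S\) is closed. Here \(\partial S\) consists of \(u\) together with the \(b^{r+1}\) children of the leaves. Then
\[
|S|+|\partial S|=\frac{b^{r+1}-1}{b-1}+b^{r+1}+1\le 3b^{r+1},
\]
so \(\mathbb P(A)\ge 2^{-3b^{r+1}}\). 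On \(A\) we have \(C_\omega(o)=S\). Since \(P_\omega\) has no matrix entries between distinct clusters, it is block diagonal, and \(\langle\delta_o,P_\omega^{2n}\delta_o\rangle=\langle\delta_o,Q^{2n}\delta_o\rangle\), where \(Q=d^{-1}A_S\) is the finite symmetric matrix given by the adjacency matrix of \(S\) divided by \(d\).

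\textbf{Explicit eigenvector.} Take the radial function \(\varphi(x)=b^{-k/2}\sin\frac{\pi(k+1)}{r+2}\) for \(x\) at depth \(k\in\{0,\dots,r\}\). Write \(s_k=\sin\frac{\pi(k+1)}{r+2}\), with the conventions \(s_{-1}=0\) and \(s_{r+1}=\sin\pi=0\). A vertex at depth \(k\) has one parent (absent when \(k=0\)) and \(b\) children (absent when \(k=r\)). Therefore
\[
(A_S\varphi)(x)=b^{-(k-1)/2}s_{k-1}+b\cdot b^{-(k+1)/2}s_{k+1}=\sqrt b\,b^{-k/2}(s_{k-1}+s_{k+1}).
\]
Since \(s_{k-1}+s_{k+1}=2\cos\frac{\pi}{r+2}\,s_k\), this equals \(2\sqrt b\cos\frac{\pi}{r+2}\,\varphi(x)\). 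Thus \(Q\varphi=\lambda\varphi\) with \(\lambda=\rho_d\cos\frac{\pi}{r+2}\). The zero boundary values \(s_{-1}=s_{r+1}=0\) are exactly what makes the equation hold at the root and at the leaves.

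There are \(b^k\) vertices at depth \(k\), so \(\|\varphi\|^2=\sum_{k=0}^r s_k^2=\frac{r+2}{2}\). Writing the spectral decomposition of the finite symmetric matrix \(Q\), every eigenvalue contributes nonnegatively to \(\langle\delta_o,Q^{2n}\delta_o\rangle\). Keeping only \(\lambda\) gives
\[
\langle\delta_o,Q^{2n}\delta_o\rangle\ge\lambda^{2n}\frac{\varphi(o)^2}{\|\varphi\|^2}=\lambda^{2n}\frac{2\sin^2\frac{\pi}{r+2}}{r+2}.
\]
Because \(r\ge1\), we have \(\frac{\pi}{r+2}\le\frac{\pi}{3}\le\frac{\pi}{2}\), so Jordan's inequality \(\sin x\ge 2x/\pi\) gives \(\sin\frac{\pi}{r+2}\ge\frac{2}{r+2}\). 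Hence the root mass is at least \(8(r+2)^{-3}\). Combining this with \(\mathbb P(A)\ge 2^{-3b^{r+1}}\) yields the lemma.

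The only delicate step is the eigenvector computation, in particular checking the boundary behaviour at the root and the leaves. The root has only \(b\) (not \(d\)) neighbors inside \(S\), and this is precisely why \(u\) is closed and the tree is forward \(b\)-ary. The remaining estimates are routine counting.
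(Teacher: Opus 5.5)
Your proof is correct and follows the paper's argument essentially step for step. It uses the same forward $b$-ary trap event with probability at least $2^{-3b^{r+1}}$, the same radial eigenvector $b^{-k/2}\sin\bigl((k+1)\pi/(r+2)\bigr)$ with eigenvalue $\rho_d\cos\frac{\pi}{r+2}$, and the same root-mass bound $2\sin^2\frac{\pi}{r+2}/(r+2)\ge 8(r+2)^{-3}$; your explicit remark that $\langle\delta_o,P_\omega^{2n}\delta_o\rangle\ge 0$ off the event is a small welcome addition that the paper leaves implicit in ``taking expectation.''
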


\begin{proof}
Fix one neighbor $o^-$ of $o$. Let $B_r$ be the forward rooted $b$-ary tree of depth $r$: it consists of $o$ and all vertices reachable from $o$ by a path of length at most $r$ whose first step is not $o^-$. 

Let $E_r$ be the event that every vertex of $B_r$ is open and every vertex of its external boundary $\partial B_r$ is closed. On $E_r$, the open cluster of $o$ is exactly $B_r$. We have
\[
\begin{aligned}
        |B_r|+|\partial B_r|
        &=
        \frac{b^{r+1}-1}{b-1}+1+b^{r+1}  
        \le
        3b^{r+1}.
\end{aligned}
\]
Then
\begin{equation}\label{eq:8}
       \Prob(E_r)
        =
        2^{-(|B_r|+|\partial B_r|)}
        \ge
        2^{- 3b^{r+1}}.    
\end{equation}

Put
\[
        \theta=\frac{\pi}{r+2}.
\]
Define a radial vector $u$ on $B_r$ by assigning to each level-$j$ vertex the value
\[
        u_j=b^{-j/2}\sin((j+1)\theta),
        \qquad 0\le j\le r.
\]
Let $A_{B_r}$ be the adjacency matrix of $B_r$. We verify that $u$ is an eigenvector. At the root,
\[
        b u_1
        =
        \sqrt b\sin(2\theta)
        =
        2\sqrt b\cos\theta\,u_0.
\]
For $1\le j\le r-1$,
\[
\begin{aligned}
        u_{j-1}+bu_{j+1}
        &=
        b^{-(j-1)/2}\sin(j\theta)
        +
        b\,b^{-(j+1)/2}\sin((j+2)\theta) \\
        &=
        b^{-(j-1)/2}
        \bigl[\sin(j\theta)+\sin((j+2)\theta)\bigr] \\
        &=
        2\sqrt b\cos\theta\,b^{-j/2}\sin((j+1)\theta)\\
        &=
        2\sqrt b\cos\theta\,u_j.
\end{aligned}
\]
At level $r$, the only neighbor inside $B_r$ is the parent. Since $(r+2)\theta=\pi$,
\[
        u_{r-1}
        =
        2\sqrt b\cos\theta\,u_r.
\]
Therefore
\[
        A_{B_r}u=2\sqrt b\cos\theta\,u.
\]
The killed transition operator on $B_r$ is $A_{B_r}/d$, so it has an eigenvalue
\begin{equation}\label{eq: 9}
            \lambda_r
        =
        \frac{2\sqrt b}{d}\cos\theta
        =
        \rho_d\cos\frac{\pi}{r+2}.
\end{equation}

Let $\phi_r$ be the normalized positive eigenfunction. Since level $j$ has $b^j$ vertices,
\[
        \phi_r(o)^2
        =
        \frac{\sin^2\theta}{\sum_{j=0}^r b^j b^{-j}\sin^2((j+1)\theta)}
        =
        \frac{\sin^2\theta}{\sum_{k=1}^{r+1}\sin^2(k\theta)}=\frac{2\sin^2\theta}{r+2}.
\]
Also $\sin\theta\ge 2/(r+2)$ for $r\ge1$. Hence
\begin{equation}
            \phi_r(o)^2
        =
        \frac{2\sin^2\theta}{r+2}
        \ge
        \frac8{(r+2)^3}.
        \label{eq: 10}
\end{equation}

On $E_r$, spectral expansion on the finite cluster $B_r$ gives
\[
        \langle\delta_o,P_\omega^{2n}\delta_o\rangle
        \ge 
        |\langle\delta_o,\phi_r\rangle |^2\lambda_r^{2n}=\phi_r(o)^2\lambda_r^{2n}.
\]
Taking expectation and using (\ref{eq: return_spectrum}),(\ref{eq:8}),(\ref{eq: 9}), and (\ref{eq: 10}), we obtain the asserted bound.\\
\end{proof}

We are now ready to prove the lower bound for the return probability of the switch--walk--switch lamplighter walk, which concludes one direction of Theorem \ref{thm:main}.
\begin{proposition}\label{prop:lower}
As $n\to\infty$,
\[
        p_{2n}(e,e)
        \ge
        \rho_d^{2n}
        \exp\left[-\bigl(\pi^2(\log b)^2+o(1)\bigr)\frac{n}{\log^2 n}\right].
\]
\end{proposition}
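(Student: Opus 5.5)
We apply Lemma~\ref{lem:trap} with a depth $r=r(n)$ chosen to balance its two costs, and then take logarithms. Dividing the bound of Lemma~\ref{lem:trap} by $\rho_d^{2n}$ gives
\[
        -\log\frac{p_{2n}(e,e)}{\rho_d^{2n}}
        \le
        3\log(r+2)+3b^{r+1}\log 2-\log 8-2n\log\cos\frac{\pi}{r+2}.
\]
There are two competing terms. The spectral loss satisfies $-\log\cos x=\tfrac{x^2}{2}(1+O(x^2))$, so
\[
        -2n\log\cos\frac{\pi}{r+2}=\bigl(1+O(r^{-2})\bigr)\frac{\pi^2 n}{(r+2)^2}.
\]
The probabilistic cost is $3b^{r+1}\log2$. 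We need the probabilistic cost to be $o(n/\log^2 n)$ while keeping $r$ as close to $\log_b n$ as possible.

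Concretely, we take
\[
        r=\Bigl\lfloor \log_b n-3\log_b\log n\Bigr\rfloor .
\]
Then $b^{r+1}\le b\,n/\log^3 n=o(n/\log^2 n)$, and $\log(r+2)=O(\log\log n)$ is negligible as well. Since $r+2=\log_b n\,(1+O(\log\log n/\log n))$, we get
\[
        \frac{\pi^2 n}{(r+2)^2}
        =\bigl(1+o(1)\bigr)\frac{\pi^2(\log b)^2\,n}{\log^2 n}.
\]
The $O(r^{-2})$ correction is absorbed into this $1+o(1)$ factor.

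Adding the pieces, the right-hand side equals $(\pi^2(\log b)^2+o(1))\,n/\log^2 n$. Exponentiating gives the claim of Proposition~\ref{prop:lower}.

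The only delicate point is the choice of $r$. Taking $r=\log_b n$ exactly would make $b^{r}\approx n$, so the probabilistic cost would be of order $n$ and would dominate. It is therefore essential to shave $r$ by a lower-order amount of size $O(\log\log n)$. That amount must be large enough to make $b^r=o(n/\log^2 n)$, yet small enough that $r\sim\log_b n$, so the constant $\pi^2(\log b)^2$ is unaffected. Everything else is routine asymptotics.
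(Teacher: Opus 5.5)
Your proposal is correct and matches the paper's proof essentially step for step. You use the same depth $r=\lfloor\log_b n-3\log_b\log n\rfloor$ in Lemma~\ref{lem:trap}, the same bound $b^{r+1}=O(n/\log^3 n)=o(n/\log^2 n)$ on the probabilistic cost, and the same expansion of $\log\cos\frac{\pi}{r+2}$ to extract the constant $\pi^2(\log b)^2$.
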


\begin{proof}
For all sufficiently large $n$, set
\[
        r_n=\left\lfloor \log_b n-3\log_b\log n\right\rfloor.
\]
Then $r_n\to\infty$, and
\begin{equation}\label{eq: 11}
            b^{r_n+1}
        \le
        b^{\log_b n-3\log_b\log n+1}
        =
        O\left(\frac{n}{(\log n)^3}\right)
        =
        o\left(\frac{n}{\log^2 n}\right).
\end{equation}

Moreover,
\[
        r_n+2
        =
        \frac{\log n}{\log b}(1+o(1)),
        \qquad
        \frac1{(r_n+2)^2}
        =
        \frac{(\log b)^2}{\log^2 n}(1+o(1)).
\]
Since
\[
        \log\cos x=-\frac{x^2}{2}+O(x^4)
        \qquad(x\downarrow0),
\]
we have

\begin{equation} \label{eq: 13}
\begin{split}
    2n\log\cos\frac{\pi}{r_n+2}
        &=
        -\frac{\pi^2 n}{(r_n+2)^2}
        +O\left(\frac{n}{(r_n+2)^4}\right)  \\
        &=
        -\bigl(\pi^2(\log b)^2+o(1)\bigr)
        \frac{n}{\log^2 n}.
\end{split}
\end{equation}

The prefactor in Lemma \ref{lem:trap} contributes to the logarithm only
\begin{equation}
      O(b^{r_n+1})+O(\log r_n)
        =
        o\left(\frac{n}{\log^2 n}\right)   
        \label{eq: 14}
\end{equation}

by (\ref{eq: 11}). Substituting $r=r_n$ in Lemma \ref{lem:trap} and using (\ref{eq: 13}) and (\ref{eq: 14}) gives the claim.
\end{proof}

\section{Two lemmas of spectral analysis}

We need the following lemma, which is the one-dimensional case of Theorem 5.1 of \cite{EGR}.
\begin{lemma}
\label{lem:min-variance-compact-cf}
Let \(\nu\) be a probability measure on \(\mathbb R\), and let
\[
    \widehat \nu(t)
    =
    \int_{\mathbb R} e^{itx}\,d\nu(x)
\]
be its characteristic function. Suppose that
\[
    \widehat \nu(t)=0
    \qquad\text{for all } |t|\ge 1.
\]
Then
\[
    \int_{\mathbb R} x^2\,d\nu(x)\ge \pi^2.
\]
\end{lemma}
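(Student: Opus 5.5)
The plan is to turn the statement into a Dirichlet eigenvalue problem on an interval of length $1$. The relevant eigenvalue is $\pi^2$.

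\emph{Reduction to a density.} We may assume $\int x^2\,d\nu<\infty$, since otherwise there is nothing to prove. Because $\widehat\nu$ is bounded and vanishes outside $[-1,1]$, it is integrable. Fourier inversion then gives $d\nu=f\,dx$ with
\[
f(x)=\frac1{2\pi}\int_{-1}^{1}\widehat\nu(t)e^{-itx}\,dt .
\]
This $f$ extends to an entire function of exponential type at most $1$. It is nonnegative on $\mathbb R$ and integrable there.

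\emph{Factorization.} Next I would invoke the Krein/Akhiezer (Fej\'er--Riesz type) factorization. A nonnegative entire function of exponential type $\sigma$ that is integrable on $\mathbb R$ can be written as $f=|h|^2$ on $\mathbb R$, where $h$ is entire of exponential type $\sigma/2$ and $h\in L^2(\mathbb R)$. By Paley--Wiener with $\sigma=1$, we get $h(x)=\int_{-1/2}^{1/2} g(s)e^{-isx}\,ds$ for some $g\in L^2[-1/2,1/2]$. Equivalently, $\widehat\nu=g*\tilde g$ up to normalization, which is the Boas--Kac factorization of a compactly supported positive definite function. By Plancherel, $1=\int f=2\pi\|g\|_{L^2}^2$.

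\emph{Second moment as a Dirichlet energy.} The hypothesis $\int x^2|h(x)|^2\,dx<\infty$ says that $xh\in L^2(\mathbb R)$. The function $xh$ is again entire of exponential type $1/2$, so it is the Fourier transform of a distribution supported in $[-1/2,1/2]$, namely $i$ times the distributional derivative of the zero-extension of $g$. Since this derivative lies in $L^2(\mathbb R)$, the zero-extension is in $H^1(\mathbb R)$. Hence $g\in H^1_0(-1/2,1/2)$: it is absolutely continuous with $g(\pm1/2)=0$. Plancherel then gives $\int x^2 f\,dx=2\pi\|g'\|_{L^2}^2$.

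\emph{Conclusion.} The Wirtinger (Dirichlet Poincar\'e) inequality on an interval of length $1$ gives $\|g'\|^2\ge\pi^2\|g\|^2$, with equality for $g(s)=\cos(\pi s)$. Therefore $\int x^2\,d\nu\ge\pi^2\cdot 2\pi\|g\|^2=\pi^2$.

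The main obstacle is the factorization step, together with the regularity claim that $g$ vanishes at the endpoints. These are exactly what make the constant sharp. I would cite Akhiezer's theorem for the factorization. I would handle the endpoint condition by the distributional-derivative argument above: boundary jumps of $g$ would create Dirac masses in $(\mathbf 1_{[-1/2,1/2]}g)'$, and these are excluded because that derivative is the $L^2$ function corresponding to $xh$. Alternatively, the whole lemma can simply be quoted from \cite{EGR}, Theorem 5.1 with dimension one, as the paper indicates.
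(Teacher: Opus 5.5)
Your proof is correct. The paper gives no argument for this lemma; it simply quotes the one-dimensional case of Theorem~5.1 of \cite{EGR}. You instead prove the one-dimensional statement directly, and the steps check out.

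Integrability of \(\widehat\nu\) gives a bounded density \(f\) of exponential type at most \(1\). Boundedness of \(f\) supplies the condition \(\int\log^+ f(x)\,(1+x^2)^{-1}\,dx<\infty\) needed for the Krein--Akhiezer factorization \(f=|h|^2\), with \(h\) of type at most \(1/2\) and \(\|h\|_{L^2}^2=\int f=1\). Paley--Wiener then produces the Boas--Kac root \(g\in L^2[-1/2,1/2]\).

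Your regularity step is also right, and it is the crux. For the zero-extension \(G\) of \(g\), distributionally \(\mathcal F(G')(x)=ix\,h(x)\). So a finite second moment forces \(G'\in L^2(\mathbb R)\), hence \(G\in H^1(\mathbb R)\) is continuous and \(g(\pm1/2)=0\). This is exactly where the Dirichlet constant \(\pi^2\) enters, rather than the trivial Neumann bound \(0\). Your remark that \(xh\) has type \(1/2\) is not needed, since the distributional identity alone does the work, and the sign in front of \(G'\) is immaterial.

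As for what each route buys: the citation is short and sits inside a multidimensional theorem on convolution roots of radial positive definite functions, of which the paper uses only \(d=1\). Your route makes the lemma self-contained modulo classical one-variable facts (Akhiezer, Paley--Wiener, Plancherel, Wirtinger). It also exhibits the extremizer \(g(s)=\cos(\pi s)\), for which \(\widehat\nu\) is the Bohman function \((1-|t|)\cos(\pi t)+\pi^{-1}\sin(\pi|t|)\) on \([-1,1]\). So it shows the constant \(\pi^2\) is sharp, and with it that the range \(\gamma<\pi^2/2\) in Lemma~\ref{lem:small-power-numerical-radius} is the best this compactness argument can give.
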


\begin{lemma}
\label{lem:small-power-numerical-radius}
Let \(0<\gamma<\pi^2/2\). Then there exist constants
\[
    \varepsilon=\varepsilon(\gamma)>0,
    \qquad
    m_0=m_0(\gamma)<\infty,
\]
such that the following holds. If \(S\) is a contraction on a Hilbert space
and \(m\ge m_0\), then
\[
    \|S^m\|\le \varepsilon
    \quad\Longrightarrow\quad
    w(S)\le 1-\frac{\gamma}{m^2},
\]
where
\[
    w(S)=\sup_{\|u\|=1}|\langle Su,u\rangle|
\]
is the numerical radius of \(S\).
\end{lemma}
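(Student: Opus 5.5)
The plan is to argue by contradiction and pass to a limit measure that violates Lemma~\ref{lem:min-variance-compact-cf}. The translation from operators to measures uses the Sz.-Nagy unitary dilation.

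\textbf{Reduction to measures.} Suppose the statement fails for some \(0<\gamma<\pi^2/2\). Then there are \(m_j\to\infty\), \(\varepsilon_j\to0\), contractions \(S_j\) with \(\|S_j^{m_j}\|\le\varepsilon_j\), and \(w(S_j)>1-\gamma/m_j^2\).

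1. Choose a unit vector \(u_j\) with \(|\langle S_ju_j,u_j\rangle|>1-\gamma/m_j^2\).
2. Replace \(S_j\) by \(e^{i\varphi_j}S_j\). This is still a contraction, the norms of its powers are unchanged, and we may now assume \(\langle S_ju_j,u_j\rangle\) is real and positive.
3. Let \(U_j\) be a unitary dilation of \(S_j\), so that \(S_j^k=P U_j^k|_H\) for \(k\ge0\). Let \(\mu_j\) be the spectral measure of \(U_j\) at \(u_j\), viewed as a probability measure on \((-\pi,\pi]\). Then \(\langle S_j^ku_j,u_j\rangle=\int e^{ik\theta}\,d\mu_j\) for \(k\ge0\).
4. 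The numerical-radius hypothesis becomes
\[
\int(1-\cos\theta)\,d\mu_j<\gamma/m_j^2 .
\]
5. Since \(\|S_j^k\|\le\|S_j^{m_j}\|\le\varepsilon_j\) for every \(k\ge m_j\), we get \(|\widehat{\mu_j}(k)|\le\varepsilon_j\) for all integers \(|k|\ge m_j\). Negative \(k\) follow by conjugation.

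\textbf{Rescaling and a limit.} Let \(\nu_j\) be the pushforward of \(\mu_j\) under \(\theta\mapsto m_j\theta\).

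1. Since \(1-\cos\theta\ge 2\theta^2/\pi^2\) on \([-\pi,\pi]\), we get \(\int x^2\,d\nu_j\le \pi^2\gamma/2\). So the family is tight, and a subsequence converges weakly to a probability measure \(\nu\).
2. For the sharp second-moment bound, use that \(g(s)=(1-\cos s)/s^2\) is decreasing on \([0,\pi]\). For fixed \(M\), define \(f_M(x)=2M^2(1-\cos(x/M))\) on \(|x|\le\pi M\), and \(f_M=0\) elsewhere. Then \(f_M\) is lower semicontinuous and nonnegative.
3. For \(m_j\ge M\) we have \(2m_j^2(1-\cos(x/m_j))\ge f_M(x)\) pointwise on the support of \(\nu_j\). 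Hence \(\int f_M\,d\nu_j\le 2\gamma\).
4. By the portmanteau theorem, \(\int f_M\,d\nu\le2\gamma\). Since \(f_M\uparrow x^2\) as \(M\to\infty\), monotone convergence gives
\[
\int x^2\,d\nu\le 2\gamma<\pi^2 .
\]

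\textbf{Vanishing of the characteristic function.} This is the step I expect to be the main obstacle. The hypothesis only controls integer frequencies \(k\ge m_j\) of \(\mu_j\), whereas Lemma~\ref{lem:min-variance-compact-cf} needs \(\widehat\nu(t)=0\) for all real \(|t|\ge1\).

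1. Fix \(t\ge1\) and write \(tm_j=k_j+s_j\) with \(k_j=\lfloor tm_j\rfloor\ge m_j\) and \(s_j\in[0,1)\).
2. By Cauchy--Schwarz,
\[
\Bigl|\widehat{\nu_j}(t)-\widehat{\mu_j}(k_j)\Bigr|
=\Bigl|\int e^{ik_j\theta}(e^{is_j\theta}-1)\,d\mu_j\Bigr|
\le\int|\theta|\,d\mu_j
\le\Bigl(\int\theta^2\,d\mu_j\Bigr)^{1/2}=O(1/m_j).
\]
3. Combined with \(|\widehat{\mu_j}(k_j)|\le\varepsilon_j\), this gives \(\widehat{\nu_j}(t)\to0\).
4. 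Weak convergence gives pointwise convergence of characteristic functions, so \(\widehat\nu(t)=0\) for \(t\ge1\). The case \(t\le-1\) follows by conjugation.

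Lemma~\ref{lem:min-variance-compact-cf} then forces \(\int x^2\,d\nu\ge\pi^2\), contradicting \(\int x^2\,d\nu\le 2\gamma<\pi^2\). This yields the existence of \(\varepsilon(\gamma)\) and \(m_0(\gamma)\).
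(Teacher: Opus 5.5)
Your proposal is correct and follows essentially the same route as the paper: contradiction, Sz.-Nagy dilation, rescaling $\mu_j$ by $m_j$, making $\widehat\nu$ vanish on $|t|\ge1$ via $k_j=\lfloor t m_j\rfloor$ and the first-moment estimate, then Lemma~\ref{lem:min-variance-compact-cf}. The only difference is that you get $\int x^2\,d\nu\le 2\gamma$ from the monotonicity of $(1-\cos s)/s^2$ and a semicontinuous portmanteau argument, where the paper uses a continuous cutoff $\chi_R$ and uniform convergence on compacts.

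One small repair is needed: as written, $f_M$ equals $4M^2$ at $|x|=\pi M$ and $0$ just beyond, so it is not lower semicontinuous. Define it by $2M^2(1-\cos(x/M))$ only on the open set $|x|<\pi M$, and the rest of your argument goes through unchanged.
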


\begin{proof}
Suppose the claim fails. Then there exist contractions \(S_j\), integers
\(m_j\to\infty\), and numbers \(\varepsilon_j\downarrow0\) such that
\[
    \|S_j^{m_j}\|\le \varepsilon_j,
    \qquad
    w(S_j)>1-\frac{\gamma}{m_j^2}.
\]
After multiplying \(S_j\) by a scalar of modulus one, we may choose unit
vectors \(\xi_j\) such that
\[
    \operatorname{Re}\langle S_j\xi_j,\xi_j\rangle
   >
    1-\frac{\gamma}{m_j^2}.
\]
By the Sz.-Nagy unitary dilation theorem, there is a unitary \(W_j\) on a
larger Hilbert space, containing \(\xi_j\), such that
\[
    \langle W_j^k\xi_j,\xi_j\rangle
    =
    \langle S_j^k\xi_j,\xi_j\rangle
    \qquad
    \text{for all } k\ge 0.
\]
By the spectral theorem for \(W_j\), there is a probability measure \(\mu_j\)
on the unit circle, written as \(e^{i\theta}\) with \(\theta\in[-\pi,\pi)\),
such that
\[
    \widehat\mu_j(k)
    :=
    \int e^{ik\theta}\,d\mu_j(\theta)
    =
    \langle S_j^k\xi_j,\xi_j\rangle
    \qquad
    \text{for all } k\ge0.
\]

Push \(\mu_j\) forward under the map
\[
    \theta\mapsto m_j\theta,
\]
and call the resulting probability measure on
\([-\pi m_j,\pi m_j)\) by the name \(\nu_j\). From the previous lower bound on
\(\operatorname{Re}\langle S_j\xi_j,\xi_j\rangle\), we get
\[
    \int (1-\cos\theta)\,d\mu_j(\theta)
    \le
    \frac{\gamma}{m_j^2}.
\]
Since
\[
    1-\cos\theta\ge \frac{2\theta^2}{\pi^2}
    \qquad
    \text{for } \theta\in[-\pi,\pi],
\]

The measures \((\nu_j)\) have uniformly bounded second moments. Indeed, since \(\nu_j\) is the pushforward of \(\mu_j\) under
\(t=m_j\theta\),
\[
\begin{aligned}
        \int t^2\,d\nu_j(t)
        &=
        m_j^2\int \theta^2\,d\mu_j(\theta) \\
        &\le
        \frac{\pi^2}{2}m_j^2
        \int(1-\cos\theta)\,d\mu_j(\theta) \\
        &\le
        \frac{\pi^2\gamma}{2}.
\end{aligned}
\]
Thus \((\nu_j)\) has uniformly bounded second moments, and in particular is
tight and has uniformly bounded first moments.

Passing to a subsequence, assume that
\[
    \nu_j\Rightarrow \nu
\]
weakly.

We next show that the characteristic function of \(\nu\) vanishes outside
\((-1,1)\). Fix \(s\ge1\), and put
\[
    k_j=\lfloor s m_j\rfloor.
\]
Then \(k_j\ge m_j\) for all sufficiently large \(j\), and therefore
\[
    \|S_j^{k_j}\|
    \le
    \|S_j^{m_j}\|
    \le
    \varepsilon_j.
\]
Hence
\[
    \left|
    \int e^{i(k_j/m_j)t}\,d\nu_j(t)
    \right|
    =
    |\widehat\mu_j(k_j)|
    =
    |\langle S_j^{k_j}\xi_j,\xi_j\rangle|
    \le
    \varepsilon_j
    \to0.
\]
Since \(k_j/m_j\to s\) and the first moments of \(\nu_j\) are uniformly
bounded, we also have
\[
    \int e^{i(k_j/m_j)t}\,d\nu_j(t)
    -
    \int e^{ist}\,d\nu_j(t)
    \to0.
\]
By weak convergence,
\[
    \int e^{ist}\,d\nu_j(t)
    \to
    \int e^{ist}\,d\nu(t).
\]
Thus
\[
    \widehat\nu(s)=0
    \qquad
    \text{for all } s\ge1.
\]
By conjugation,
\[
    \widehat\nu(s)=0
    \qquad
    \text{for all } s\le -1.
\]
Therefore Lemma~\ref{lem:min-variance-compact-cf} gives
\[
    \int t^2\,d\nu(t)\ge \pi^2.
\]

On the other hand, from
\[
    \int (1-\cos\theta)\,d\mu_j(\theta)
    \le
    \frac{\gamma}{m_j^2},
\]
we get
\[
    \int m_j^2\left(1-\cos\frac{t}{m_j}\right)\,d\nu_j(t)
    \le
    \gamma.
\]
Let \(\chi_R\) be a continuous cutoff satisfying
\[
    0\le \chi_R\le1,
    \qquad
    \chi_R(t)=1 \text{ for } |t|\le R,
    \qquad
    \chi_R(t)=0 \text{ for } |t|\ge R+1.
\]
Since
\[
    m_j^2\left(1-\cos\frac{t}{m_j}\right)
    \to
    \frac{t^2}{2}
\]
uniformly on compact sets, weak convergence gives
\[
    \frac12\int \chi_R(t)t^2\,d\nu(t)
    \le
    \gamma.
\]
Letting \(R\to\infty\), we obtain
\[
    \int t^2\,d\nu(t)\le 2\gamma.
\]
This contradicts
\[
    2\gamma<\pi^2.
\]
The contradiction proves the lemma.
\end{proof}

\section{Sparse-ball spectral estimate }

\begin{proposition}
\label{prop:dense-ball}

For every \(0<\eta<\pi/\sqrt2\), there exist constants
\[
    a_\eta\in(0,1),
    \qquad
    \delta_0=\delta_0(d,\eta)>0,
\]
such that the following deterministic statement holds. For
\(0<\delta<\delta_0\), set
\[
    r_\delta
    =
    \left\lfloor
        \frac{\pi/\sqrt2-\eta}{\sqrt\delta}
    \right\rfloor.
\]
Let \(T\) be a nonempty connected subtree of \(T_d\) and $A_T$ be its adjacency matrix. If
\[
    \sup_{v\in T}|T\cap B_{T_d}(v,r_\delta)|
    <
    a_\eta b^{r_\delta},
\]
then the killed transition operator
\[
    P_T=\frac{1}{d}A_T
\]
satisfies
\[
    \sup\sigma(P_T)
    <
    \rho_d\bigl(1-\delta\bigr).
\]
\end{proposition}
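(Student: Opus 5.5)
The plan is to reduce Proposition~\ref{prop:dense-ball} to Lemma~\ref{lem:small-power-numerical-radius}, applied to the normalized operator $S=P_T/\rho_d=A_T/(2\sqrt b)$. Since $A_T$ is a compression of $A_{T_d}$, $S$ is a self-adjoint contraction, and $\sup\sigma(P_T)\le \rho_d\, w(S)$. Given $\eta$, I first fix $\gamma\in\bigl((\pi/\sqrt2-\eta)^2,\ \pi^2/2\bigr)$, which is possible since $0<\eta<\pi/\sqrt2$. Lemma~\ref{lem:small-power-numerical-radius} then supplies $\varepsilon(\gamma)$ and $m_0(\gamma)$. I take $m=r_\delta$, which is $\ge m_0$ once $\delta_0$ is small. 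If I can show $\|S^{m}\|\le\varepsilon$, the lemma gives
\[
\sup\sigma(P_T)\le \rho_d\Bigl(1-\frac{\gamma}{r_\delta^2}\Bigr)\le \rho_d\Bigl(1-\frac{\gamma\,\delta}{(\pi/\sqrt2-\eta)^2}\Bigr)<\rho_d(1-\delta),
\]
because $r_\delta\le(\pi/\sqrt2-\eta)\delta^{-1/2}$ and $\gamma>(\pi/\sqrt2-\eta)^2$. Thus everything reduces to the deterministic estimate
\[
\|S^{r}\|\le\varepsilon \qquad\text{whenever } \sup_{v}|T\cap B(v,r)|<a\,b^{r},
\]
with $a=a_\eta$ chosen small depending only on $\varepsilon(\gamma)$, hence only on $\eta$.

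To prove this estimate I would expand $S^r(x,y)=(2\sqrt b)^{-r}\,\#\{\text{walks of length } r \text{ in } T \text{ from } x \text{ to } y\}$. Such walks stay in $T\cap B(x,r)$, and $S^r(x,y)=0$ unless $d(x,y)\le r$. The walk count in $T$ is at most the walk count in $T_d$. In $T_d$, the number of length-$r$ walks between vertices at distance $k$ is, up to factors polynomial in $r$, $(2\sqrt b)^r b^{-k/2}$; the polynomial factor is of order $(k+1)r^{-3/2}$ from the ballot/reflection count for the radial distance process. So the kernel satisfies
\[
S^r(x,y)\lesssim \frac{k+1}{r^{3/2}}\,b^{-k/2},\qquad k=d(x,y).
\]
Then I would bound the norm by a weighted Schur test. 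For fixed $x$, let $n_k(x)=|T\cap\partial B(x,k)|$. The trivial bound is $n_k(x)\le b^k$; the hypothesis gives $\sum_{k\le r}n_k(x)<ab^r$. The sparsity must be used to cut off the dangerous large-$k$ shells: in full $T_d$ these shells are exactly what makes $\|S^r\|=1$ asymptotically in the $\ell^2$ sense.

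The main obstacle is precisely this norm bound. A constant-weight Schur test is too crude: it gives a row sum of size $\sum_k n_k b^{-k/2}$, which is large even for sparse $T$. What I would try first is a genuinely $\ell^2$ argument. Write $\|S^r\|^2=\|S^{2r}\|$, and control $\langle f,S^{2r}f\rangle$ by decomposing each walk at its vertex of maximal distance from its start, or at the last common ancestor in the tree geometry. This bounds the quadratic form by $\sum_x |f(x)|^2$ times an average over the ball $B(x,r)$ of the density $|T\cap B(v,r)|/b^r$. The bound would then be $C(r)\,a^{1/2}$ plus a remainder that is small in $r$, coming from the $r^{-3/2}$ ballot decay. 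Choosing $a_\eta$ small and $\delta_0$ small (so that $r_\delta$ is large) makes this $\le\varepsilon(\gamma)$.

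If this is too lossy, the fallback is to show directly that the Perron eigenvalue of $A_T$ restricted to any ball is below $2\sqrt b\,(1-c)$ with $c$ independent of $r$ when density is $<ab^r$. Along with this I would use a radial Rayleigh-quotient comparison in the spirit of the trap computation in Lemma~\ref{lem:trap}. From that I would deduce $\|S^r\|\to0$ uniformly as $r\to\infty$, which again feeds into Lemma~\ref{lem:small-power-numerical-radius}.
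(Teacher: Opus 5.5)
Your outer framework matches the paper: the choice $\gamma\in\bigl((\pi/\sqrt2-\eta)^2,\pi^2/2\bigr)$, the application of Lemma~\ref{lem:small-power-numerical-radius} with $m=r_\delta$, and the final arithmetic. There is a genuine gap, however: you feed the lemma the wrong operator, and the intermediate estimate you reduce to is false. For the self-adjoint $S=A_T/(2\sqrt b)$ one has $\|S^m\|=\|S\|^m$. So $\|S^r\|\le\varepsilon$ would force $\|S\|\le\varepsilon^{1/r}=1-O(1/r)$, a spectral gap of order $1/r$. The true gap under the sparse-ball hypothesis is only of order $1/r^2$.

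Concretely, let $T$ be the forward $b$-ary tree of depth $r'=r-C_a$ from Lemma~\ref{lem:trap}, with $C_a$ chosen so that $|T|<b^{r'+1}\le a\,b^r$. This $T$ satisfies the hypothesis. Its positive radial eigenvector is the Perron vector, so $\|S\|=\cos(\pi/(r'+2))$ and
\[
\|S^r\|=\cos^r\!\Bigl(\frac{\pi}{r-C_a+2}\Bigr)\longrightarrow 1 \qquad (r\to\infty).
\]
Hence no Schur test or $\ell^2$ walk decomposition can give $\|S^r\|\le\varepsilon$. Your fallback (an $r$-independent gap $2\sqrt b(1-c)$ for Perron eigenvalues on sparse balls) is refuted by the same example. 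Lemma~\ref{lem:small-power-numerical-radius} only has content for \emph{non-normal} contractions, where $\|S^m\|$ can be tiny while $w(S)$ is within $O(m^{-2})$ of $1$.

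The missing idea is a non-self-adjoint factorization of $A_T$.

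\begin{enumerate}
\item Since $a_\eta<1$ excludes $T=T_d$, root $T$ at a vertex $o\in T$ having a neighbor outside $T$. Then every vertex of $T$ has at most $b$ children.
\item The map $f\mapsto b^{|x|/2}f$ is unitary from $\ell^2(T)$ onto $\ell^2(T,b^{-|x|})$. It conjugates $A_T$ to $\sqrt b\,(S+S^*)$, where $S$ is the parent shift: $(Sg)(x)=g(\bar x)$ for $x\ne o$ and $(Sg)(o)=0$.
\item This $S$ is a contraction with $\|S^k\|^2=\sup_x D_k(x)/b^k$, where $D_k(x)$ counts descendants of $x$ at depth $k$. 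Since $D_k(x)\le|T\cap B(x,k)|$, the sparse-ball hypothesis gives $\|S^{r_\delta}\|^2\le a_\eta$ directly, and $a_\eta=\min\{\varepsilon^2,1/2\}$ suffices.
\item The gauge unitary $(G_\theta g)(x)=e^{i\theta|x|}g(x)$ satisfies $G_\theta SG_\theta^{-1}=e^{i\theta}S$. Hence $\|S+S^*\|=\sup_\theta\|e^{i\theta}S+e^{-i\theta}S^*\|=2w(S)$, and so $\|A_T\|=2\sqrt b\,w(S)<2\sqrt b\,(1-\delta)$.
\end{enumerate}

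For the depth-$r'$ tree above, this $S$ satisfies $S^{r'+1}=0$ while $w(S)=\cos(\pi/(r'+2))$. That is exactly the non-normal behaviour the lemma is built for. Once $S$ is replaced in this way, your remaining steps go through unchanged.
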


\begin{proof}

Choose $\gamma\in \left((\frac{\pi}{\sqrt2}-\eta)^2,\frac{\pi^2}{2}\right)$.
Let
\[
    \varepsilon=\varepsilon(\gamma)>0,
    \qquad
    m_0=m_0(\gamma)
\]
be given by Lemma~\ref{lem:small-power-numerical-radius}. Define
\[
    a_\eta=\min\{\varepsilon^2,1/2\}.
\]
Choose \(\delta_0>0\) small enough that
\[
    r_\delta\ge m_0
    \qquad
    \text{for all }0<\delta<\delta_0.
\]

Assume that \(T\) satisfies
\[
    \sup_{v\in T}|T\cap B_{T_d}(v,r_\delta)|
    <
    a_\eta b^{r_\delta}.
\]
Since \(a_\eta<1\) and \(r_\delta\ge1\), this hypothesis cannot hold for
\(T=T_d\). Thus \(T\) is a proper connected subtree. By rerooting if necessary, assume that $o\in T$ and $o^-\notin T$ for some neighbor $o^-$ of $o$. Then every vertex of \(T\)
has at most \(b=d-1\) children. Let \(|x|\) denote rooted distance from \(o\),
and define
\[
    m(x)=b^{-|x|}.
\]
The map
\[
    U:\ell^2(T)\to \ell^2(T,m),
    \qquad
    (Uf)(x)=b^{|x|/2}f(x),
\]
is unitary.

Define the rooted shift \(S\) on \(\ell^2(T,m)\) by
\[
    (Sg)(x)
    =
    \begin{cases}
        g(\bar x), & x\ne o,\\
        0, & x=o,
    \end{cases}
\]
where \(\bar x\) is the parent of \(x\). Then
\[
    \|Sg\|_{\ell^2(m)}^2
    =
    \sum_{x\ne o} b^{-|x|}|g(\bar x)|^2
    =
    \sum_{y\in T}\frac{\text{\# of children of $b$ in $T$}}{b}b^{-|y|}|g(y)|^2
    \le
    \|g\|_{\ell^2(m)}^2.
\]
Thus \(S\) is a contraction.

A direct computation gives
\[
    U A_T U^{-1}
    =
    \sqrt b\,(S+S^*).
\]
Indeed, the parent contribution becomes \(\sqrt b\,S\), while the children
contribution becomes \(\sqrt b\,S^*\).

We next estimate \(\|S^{r_\delta}\|\). Let \(D_k(x)\) be the number of
descendants of \(x\) at rooted distance exactly \(k\). Then
\[
    \|S^k\|^2
    =
    \sup_{x\in T}\frac{D_k(x)}{b^k}.
\]
To see this, note that
\[
    \|S^k g\|_{\ell^2(m)}^2
    =
    \sum_{x\in T}
    |g(x)|^2 b^{-|x|}
    \frac{D_k(x)}{b^k}
    \le
    \left(\sup_{x\in T}\frac{D_k(x)}{b^k}\right)
    \|g\|_{\ell^2(m)}^2,
\]
and equality is approached by taking \(g\) supported at a vertex attaining,
or nearly attaining, the supremum.

Since the descendant sphere of radius \(r_\delta\) from \(x\) is contained in
the ordinary ball \(B_{T_d}(x,r_\delta)\), the sparse-ball hypothesis gives
\[
    D_{r_\delta}(x)
    \le
    |T\cap B_{T_d}(x,r_\delta)|
    <
    a_\eta b^{r_\delta}.
\]
Hence
\[
    \|S^{r_\delta}\|^2
    =
    \sup_{x\in T}\frac{D_{r_\delta}(x)}{b^{r_\delta}}
    <
    a_\eta
    \le
    \varepsilon^2.
\]
Therefore
\[
    \|S^{r_\delta}\|\le \varepsilon.
\]
By Lemma~\ref{lem:small-power-numerical-radius},
\[
    w(S)
    \le
    1-\frac{\gamma}{r_\delta^2}.
\]
so that
\[
    w(S)
    <
    1-\delta.
\]

It remains to pass from \(w(S)\) back to the adjacency operator. For
\(\theta\in\mathbb R\), define a unitary \(G_\theta\) on \(\ell^2(T,m)\) by
\[
    (G_\theta g)(x)=e^{i\theta |x|}g(x).
\]
Then
\[
    G_\theta S G_\theta^{-1}=e^{i\theta}S, \qquad \text{and}\qquad      G_\theta S^* G_\theta^{-1}=e^{-i\theta}S^*.
\]
Consequently,
\[
    \|S+S^*\|
    =
    \|e^{i\theta}S+e^{-i\theta}S^*\|
    \qquad
    \text{for every }\theta.
\]
Using the standard numerical-radius identity
\[
    2w(S)
    =
    \sup_{\theta\in\mathbb R}
    \|e^{i\theta}S+e^{-i\theta}S^*\|,
\]
we obtain
\[
    \|S+S^*\|=2w(S).
\]
Therefore
\[
    \sup\sigma(A_T)
    \le
    \|A_T\|
    =
    \sqrt b\,\|S+S^*\|
    =
    2\sqrt b\,w(S)
    <
    2\sqrt b\,\bigl(1-\delta\bigr).
\]
Dividing by \(d\), and recalling that
\[
    \rho_d=\frac{2\sqrt b}{d},
\]
gives
\[
    \sup\sigma(P_T)
    <
    \rho_d\bigl(1-\delta\bigr).
\]
This proves the sparse-ball implication.
\end{proof}
\begin{lemma}\label{lem:dense-rare}
For every $a\in(0,1)$, there are constants $r_0=r_0(d,a)$ and $c=c(d,a)>0$ such that, for every $r\ge r_0$,
\[
        \Prob\left(|C_\omega(o)\cap B(o,r)|\ge a b^r\right)
        \le
        \exp(-c b^r ),
\]
where $C_\omega(o)$ is the open cluster of $o$, interpreted as empty when $o$ is closed.
\end{lemma}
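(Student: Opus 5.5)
The plan is to compare the cluster of \(o\) inside \(B(o,r)\) with a subcritical branching process, and to bound the probability of a large cluster by an exponential-moment (Chernoff) argument. Exploring from \(o\), each vertex has at most \(d\) children and each child is open independently with probability \(1/2\). The expected offspring is therefore at most \(b/2\) beyond the root, and this is subcritical only when \(b<2\), which never happens for \(d\ge3\). So a direct subcriticality argument fails. We only need the cluster to be a small fraction of the ball \(B(o,r)\), whose size is of order \(b^r\); that is, we compare against the fact that a typical percolation cluster at \(p=1/2\) has growth rate \(b/2<b\).

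\emph{Counting step.} The event \(|C_\omega(o)\cap B(o,r)|\ge ab^r\) implies that there is a connected subtree \(K\ni o\) with \(K\subseteq B(o,r)\), \(|K|=k:=\lceil ab^r\rceil\), and all vertices of \(K\) open. To see this, take the ball-truncated cluster, which is connected and contains \(o\), and prune leaves until it has \(k\) vertices. By a union bound,
\[
\Prob\left(|C_\omega(o)\cap B(o,r)|\ge ab^r\right)\le N_k(r)\,2^{-k},
\]
where \(N_k(r)\) is the number of subtrees of \(T_d\) containing \(o\), contained in \(B(o,r)\), with \(k\) vertices. The standard bound \(N_k\le (e d)^k\) (lattice animals) is too crude, since \(ed>2\). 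We must therefore use the restriction \(K\subseteq B(o,r)\), and in particular the fact that \(k\) is a constant fraction \(a\) of \(|B(o,r)|\asymp b^r\).

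\emph{Entropy bound.} A subtree of a fixed finite set of size \(M=|B(o,r)|\le \frac{b}{b-1}b^r+1\le 3b^r\) is determined by its vertex set. Hence
\[
N_k(r)\le \binom{M}{k}\le \left(\frac{eM}{k}\right)^k\le \left(\frac{3e}{a}\right)^k,
\]
which is still exponential in \(k\) with a base that may exceed \(2\). This is the main obstacle: naive counting loses. The fix is to exploit the hierarchical structure. A subtree containing \(o\) within \(B(o,r)\) is a union of root-to-vertex paths, and its size is dominated by its deep levels. Write \(L_j=|K\cap\partial B(o,j)|\). Since \(\sum_j L_j\ge ab^r\) while \(|\partial B(o,j)|\le d b^{j-1}\), the levels \(j\ge r-s\), for a constant \(s=s(a)\), must carry at least \(\tfrac a2 b^r\) vertices.

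\emph{Level-by-level step.} Instead of a union bound over all trees, use the second moment structure of the process. Let \(Z_j=|C_\omega(o)\cap\partial B(o,j)|\). Given generation \(j\), \(Z_{j+1}\) is a sum of independent \(\mathrm{Bin}(b,1/2)\) variables, one per vertex of generation \(j\) (with \(d\) children for the root). So \(Z_{j+1}\) is a \(\mathrm{Bin}(bZ_j,1/2)\), and \(\E Z_j\le d(b/2)^{j-1}\), which is \(o(b^r)\) exponentially. We want \(Z_j\ge \epsilon b^r\) for some \(j\in[r-s,r]\), with \(\epsilon=a/(2(s+1))\). Fix such a \(j\). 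Since \(Z_j\le b Z_{j-1}\), we get \(Z_{j-1}\ge \epsilon b^{r-1}\), and more generally \(Z_{j-i}\ge\epsilon b^{r-i}\). Since also \(Z_{j-i}\le db^{j-i-1}\), there is a first generation \(\ell\) at which \(Z_\ell\ge \epsilon' b^{\ell}\) but \(Z_{\ell-1}<\epsilon'b^{\ell-1}\cdot\)const. The transition from \(Z_{\ell-1}\) to \(Z_\ell\) then requires a \(\mathrm{Bin}(bZ_{\ell-1},1/2)\) to exceed roughly twice its mean by a factor. The subtle point is choosing \(\epsilon'\) so that this works, tracking the ratio \(Z_\ell/b^\ell\). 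That ratio can at most keep constant, and it halves on average.

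\emph{Martingale step.} A cleaner route is to consider \(W_j=Z_j/b^j\), which satisfies \(W_{j+1}=W_j\cdot\frac{\mathrm{Bin}(bZ_j,1/2)}{bZ_j}\). Reaching \(W_j\ge\epsilon\) at a late level \(j\ge r-s\) from \(W_0\le1\) requires no drop below \(\epsilon\) along the way. Over the steps where \(Z_i\ge \epsilon b^i\), each step must have the binomial proportion at least \(\epsilon\) overall. Take the product over the last \(t\) levels, where \(t=\lceil\log_2(1/\epsilon)\rceil+1\). At least one of these steps has proportion \(\ge 2^{-1+1/t}\cdot\)const \(>1/2+\kappa\), with \(Z_i\ge \epsilon b^{i}\ge \epsilon b^{r-s-t}\). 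The Chernoff bound then gives probability at most \(\exp(-2\kappa^2 b\epsilon b^{r-s-t})\). A union bound over the \(O(1)\) choices of levels yields \(\exp(-cb^r)\) with \(c=c(d,a)\), which gives the required bound for \(r\ge r_0\).
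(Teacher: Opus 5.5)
Your final ``martingale step'' is correct and is essentially the paper's proof. In both, the ball event is first reduced to $Z_j\ge\epsilon b^r$ for some $j\in[r-s,r]$; then, since $W_i=Z_i/b^i$ is nonincreasing for $i\ge1$ and bounded by $D=d/b<2$, one of the last $t$ binomial steps must have proportion at least $(\epsilon/D)^{1/t}>1/2$ at a level where $Z_i\ge\epsilon b^{r-s-t}$, and a conditional Chernoff bound finishes. The paper's chain of thresholds with $\alpha_i/\alpha_{i+1}>1/2$, ending above $D/2$, is just a discretized form of your geometric-mean pigeonhole. You should delete the counting/entropy attempts and the vague intermediate step. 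Also note that the recursion for $W$ fails at the root, which has $d$ children: start from $W_{j-t}\le D$ rather than $W_0\le 1$. Your choice $t=\lceil\log_2(1/\epsilon)\rceil+1$ already absorbs this, since $2^{-t}\le\epsilon/2<\epsilon/D$.
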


\begin{proof}
Let
\[
        Z_j=|C_\omega(o)\cap \partial B(o,j)|.
\]
We first prove that for every $\alpha>0$ there are constants $c_\alpha>0$ and $j_0$ such that for every $j\geq j_0$
\begin{equation}\label{eq: 25}
            \Prob(Z_j\ge \alpha b^j)
        \le
        \exp(-c_\alpha b^j).
\end{equation}

Let
\[
        D=\frac d b<2.
\]
The sphere $\partial B(o,j)$ has at most $D b^j$ vertices. Choose a finite chain
\[
        \alpha=\alpha_0<\alpha_1<\cdots<\alpha_m<D
\]
such that
\[
        \frac{\alpha_i}{\alpha_{i+1}}>\frac12
        \quad(0\le i<m),
        \qquad
        \alpha_m>\frac D2.
\]
We then choose $j_0\geq m+2 $. 

Fix $i<m$. On the event
\[
        Z_j\ge\alpha_i b^j,
        \qquad
        Z_{j-1}<\alpha_{i+1}b^{j-1},
\]
at least $\alpha_i b^j$ open children must appear among fewer than $\alpha_{i+1}b^j$ possible child sites. Conditional on the previous generation, this requires a binomial random variable with success probability $1/2$ to exceed a fraction at least $\alpha_i/\alpha_{i+1}>1/2$ of its trials. By the Chernoff bound, there is $\gamma_i>0$ such that
\[
        \Prob\bigl(Z_j\ge\alpha_i b^j,
        \ Z_{j-1}<\alpha_{i+1}b^{j-1}\bigr)
        \le
        \exp(-\gamma_i b^j).
\]
Therefore
\begin{equation}\label{eq: 28}
            \Prob(Z_j\ge\alpha_i b^j)
        \le
        \Prob(Z_{j-1}\ge\alpha_{i+1}b^{j-1})
        +
        \exp(-\gamma_i b^j).
\end{equation}

Iterating (\ref{eq: 28}) over $i=0,\ldots,m-1$, it remains to bound
\[
        \Prob(Z_{j-m}\ge\alpha_m b^{j-m}).
\]
But $Z_{j-m}$ is bounded above by the number of open vertices in $\partial B(o,j-m)$, whose size is at most $D b^{j-m}$. Since $\alpha_m>D/2$, another Chernoff bound gives
\[
        \Prob(Z_{j-m}\ge\alpha_m b^{j-m})
        \le
        \exp(-\gamma_m b^{j-m})
        \le
        \exp(-\gamma_m b^{-m}b^j).
\]
Combining this and (\ref{eq: 28}), and decreasing the constant to absorb the fixed number of terms, proves (\ref{eq: 25}).

Now choose an integer $L=L(a,d)$ such that, for all large $r$,
\begin{equation}\label{eq: 30}
 |B(o,r-L)|\le \frac a4 b^r.    
\end{equation}
Put
\[
        \alpha=\frac{a(b-1)}{4b}.
\]
If
\[
        |C_\omega(o)\cap B(o,r)|\ge a b^r,
\]
then there must be some $j\in\{r-L+1,\ldots,r\}$ with
$Z_j\ge\alpha b^j.$
        
Indeed, if all these inequalities failed, then by (\ref{eq: 30}),
\[
\begin{aligned}
        |C_\omega(o)\cap B(o,r)|
        &\le
        |B(o,r-L)|+\alpha\sum_{j=r-L+1}^r b^j \\
        &\le
        \frac a4 b^r
        +
        \frac{a(b-1)}{4b}\frac{b^{r+1}}{b-1}
        =
        \frac a2 b^r,
\end{aligned}
\]
a contradiction.

However, the probability that $ Z_j\ge\alpha b^j$ for some $j\in\{r-L+1,\ldots,r\}$ is at most $\exp(-cb^r)$ by a union bound from $(\ref{eq: 25})$, possibly with a worse constant $c$. This proves the lemma.
\end{proof}

\begin{lemma}
\label{lem:witness-density}
Let \(P_\omega\) be a bounded self-adjoint equivariant random operator on
\(\ell^2(T_d)\). Let \(W_\omega\subseteq V\) be an equivariant random set,
and let \(Q_\omega\) be multiplication by \(\mathbf 1_{W_\omega}\). Fix
\(E\in\mathbb R\). Suppose that
\[
        \mathbf 1_{[E,\infty)}
        \bigl((1-Q_\omega)P_\omega(1-Q_\omega)\bigr)
        =
        0
        \qquad\text{almost surely}.
\]
Then
\[
        \tau\bigl(\mathbf 1_{[E,\infty)}(P_\omega)\bigr)
        \le
        \tau(Q_\omega)
        =
        \mathbb P(o\in W_\omega).
\]
If the same hypothesis holds with \(P_\omega\) replaced by \(-P_\omega\),
then
\[
        \tau\bigl(\mathbf 1_{(-\infty,-E]\cup [E,\infty)}(P_\omega)\bigr)
        \le
        2\mathbb P(o\in W_\omega).
\]
\end{lemma}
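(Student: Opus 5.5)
We apply Lemma~\ref{lm: von Neumann} with \(p=\mathbf 1_{[E,\infty)}(P_\omega)\) and \(q=Q_\omega\). Both are projections in \(\mathcal N\). The operator \(p\) is a Borel function of the equivariant operator \(P_\omega\), so it is equivariant. The operator \(Q_\omega\) is multiplication by the indicator of an equivariant random set, so \(U_gQ_\omega U_g^{-1}=Q_{g\omega}\). The whole task is therefore to verify
\[
p\wedge(1-q)=0.
\]
Once this holds, Lemma~\ref{lm: von Neumann} gives \(\tau(p)\le\tau(Q_\omega)=\mathbb E\langle\delta_o,Q_\omega\delta_o\rangle=\mathbb P(o\in W_\omega)\).

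To prove the meet vanishes, we argue pointwise in \(\omega\). The meet in \(\mathcal N\) is computed as the projection onto \(\operatorname{ran}p_\omega\cap\operatorname{ran}(1-Q_\omega)\) for almost every \(\omega\). This projection is equivariant and is the strong limit of \((p(1-q)p)^k\), so it lies in \(\mathcal N\).

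Suppose \(f\neq0\) lies in this intersection. Since \(f\in\operatorname{ran}(1-Q_\omega)\), we have \((1-Q_\omega)f=f\), and hence
\[
\langle (1-Q_\omega)P_\omega(1-Q_\omega)f,f\rangle=\langle P_\omega f,f\rangle .
\]
Since \(f\in\operatorname{ran}\mathbf 1_{[E,\infty)}(P_\omega)\), the spectral theorem gives \(\langle P_\omega f,f\rangle\ge E\|f\|^2\).

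Now consider the compression \(K=(1-Q_\omega)P_\omega(1-Q_\omega)\) restricted to \(H'=\operatorname{ran}(1-Q_\omega)\). The hypothesis says its spectral projection on \([E,\infty)\) vanishes. We also have \(K=0\) on \(\operatorname{ran}Q_\omega\), where its spectrum is \(\{0\}\). If \(E\le0\) and \(\operatorname{ran}Q_\omega\neq0\), the hypothesis would fail; so whenever the hypothesis holds with \(Q_\omega\neq0\), we have \(E>0\).

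In every case \(\sigma(K|_{H'})\subset(-\infty,E)\). This does not immediately give the strict inequality \(\langle Kf,f\rangle<E\|f\|^2\) needed to contradict \(\langle Kf,f\rangle\ge E\|f\|^2\). However, it does give \(\langle Kf,f\rangle=\int_{(-\infty,E)}\lambda\,d\mu_f(\lambda)<E\|f\|^2\) for \(f\neq0\), because the spectral measure \(\mu_f\) of \(f\) gives no mass to \([E,\infty)\). This is the desired contradiction, so the intersection is trivial and \(p\wedge(1-q)=0\).

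The main subtlety is precisely this strictness point, together with confirming that the lattice meet in \(\mathcal N\) agrees with the pointwise Hilbert-space intersection almost surely. I would handle the latter through the formula \(p\wedge r=\lim_k (prp)^k\) in the strong topology, which is a measurable, equivariant operation.

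For the two-sided statement, apply the first part to \(P_\omega\) and to \(-P_\omega\). The latter is still self-adjoint and equivariant, and \(\mathbf 1_{[E,\infty)}(-P_\omega)=\mathbf 1_{(-\infty,-E]}(P_\omega)\). If \(E>0\) the two sets are disjoint, so the projections add; in general the projection onto the union is at most the sum. Additivity and monotonicity of \(\tau\) then give the bound \(2\mathbb P(o\in W_\omega)\).
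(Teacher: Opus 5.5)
Your proposal is correct and is essentially the paper's proof: you show $\mathbf 1_{[E,\infty)}(P_\omega)\wedge(1-Q_\omega)=0$ fiberwise, using that the spectral measure $\mu_f$ for $K=(1-Q_\omega)P_\omega(1-Q_\omega)$ puts no mass on $[E,\infty)$, then invoke Lemma~\ref{lm: von Neumann} and combine the bounds for $\pm P_\omega$; your remarks on computing the meet fiberwise and on $\mathbf 1_{A\cup B}\le\mathbf 1_A+\mathbf 1_B$ only make explicit what the paper leaves implicit. One small slip: the hypothesis gives $\sigma(K|_{H'})\subset(-\infty,E]$ with $E$ not an eigenvalue, not $\sigma(K|_{H'})\subset(-\infty,E)$, which by compactness would make the strict inequality trivial; your spectral-measure argument is exactly what handles the real situation and is unaffected.
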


\begin{proof}
Set
\[
        \Pi_\omega=\mathbf 1_{[E,\infty)}(P_\omega).
\]
By equivariance and the bounded Borel functional calculus,
\(\Pi=(\Pi_\omega)_\omega\) is a projection in \(\mathcal N\). Similarly,
\(Q=(Q_\omega)_\omega\) is a projection in \(\mathcal N\).

We claim that
\[
        \Pi\wedge(1-Q)=0.
\]
Indeed, suppose
\[
        f\in \operatorname{Ran}\Pi_\omega
        \cap
        \operatorname{Ran}(1-Q_\omega).
\]
Then \(f=(1-Q_\omega)f\). Since \(f\in \operatorname{Ran}\Pi_\omega\), the
spectral theorem gives
\[
        \langle P_\omega f,f\rangle\ge E\|f\|^2.
        \tag{*}
\]
On the other hand, by assumption 
\[
        \langle (1-Q_\omega)P_\omega(1-Q_\omega) f,f\rangle
        =
        \int_{(-\infty,E)} \lambda\,d\mu_f(\lambda)
        <
        E\|f\|^2.
        \tag{**}
\]
Since \(f=(1-Q_\omega)f\),
\[
        \langle P_\omega f,f\rangle
        =
       \langle (1-Q_\omega)P_\omega(1-Q_\omega) f,f\rangle.
\]
Combining \((*)\) and \((**)\) forces \(f=0\). Therefore
\[
        \operatorname{Ran}\Pi_\omega
        \cap
        \operatorname{Ran}(1-Q_\omega)
        =
        \{0\}
        \qquad\text{almost surely},
\]
which is precisely
\[
        \Pi\wedge(1-Q)=0.
\]

By Lemma~\ref{lm: von Neumann},
\[
        \tau(\Pi)\le \tau(Q).
\]
Since \(Q_\omega\) is multiplication by \(\mathbf 1_{W_\omega}\),
\[
        \tau(Q)
        =
        \mathbb E\langle \delta_o,Q_\omega\delta_o\rangle
        =
        \mathbb P(o\in W_\omega).
\]
This proves the positive-edge estimate.

Applying the same argument to \(-P_\omega\) gives
\[
        \tau\bigl(\mathbf 1_{[E,\infty)}(-P_\omega)\bigr)
        \le
        \tau(Q).
\]
Since
\[
        \mathbf 1_{[E,\infty)}(-P_\omega)
        =
        \mathbf 1_{(-\infty,-E]}(P_\omega),
\]
adding the two estimates gives the two-sided bound.
\end{proof}
\section{The upper bound}

\begin{proposition}\label{prop:edge-tail}
Let
\[
        \kappa_d=\frac\pi{\sqrt2}\log b.
\]
For every $\eta>0$, there are constants $c=c(d,\eta)>0$ and $\delta_0=\delta_0(d,\eta)>0$ such that, for all $0<\delta<\delta_0$,
\[
        \nu_d\bigl(\{\lambda:|\lambda|\ge\rho_d(1-\delta)\}\bigr)
        \le
        \exp\left[-c\exp\left(\frac{\kappa_d-\eta}{\sqrt\delta}\right)\right].
\]
\end{proposition}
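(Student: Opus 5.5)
The plan is to combine Proposition~\ref{prop:dense-ball}, Lemma~\ref{lem:dense-rare} and Lemma~\ref{lem:witness-density}. We choose the witness set to be the set of vertices lying in a dense piece of their own cluster. Given $\eta>0$, we first fix $\eta'\in(0,\pi/\sqrt2)$ so small that $(\pi/\sqrt2-\eta')\log b>\kappa_d-\eta/2$. Proposition~\ref{prop:dense-ball} with $\eta'$ then supplies $a=a_{\eta'}\in(0,1)$ and a threshold $\delta_0$. For $0<\delta<\delta_0$ put $r=r_\delta$ as there, and define the equivariant random set
\[
        W_\omega=\bigl\{v\in V:\ |C_\omega(v)\cap B(v,r)|\ge a b^{r}\bigr\}.
\]
Since the rule defining $W_\omega$ commutes with automorphisms, $W_\omega$ is equivariant. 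By Lemma~\ref{lem:dense-rare} at the root, for $r\ge r_0(d,a)$ we get $\Prob(o\in W_\omega)\le \exp(-c b^{r})$.

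Next we check the hypothesis of Lemma~\ref{lem:witness-density} with $E=\rho_d(1-\delta)$, for both $P_\omega$ and $-P_\omega$. The operator $(1-Q_\omega)P_\omega(1-Q_\omega)$ is the killed walk on the open subgraph with the vertices of $W_\omega$ also removed. It is therefore a direct sum of operators $P_T=A_T/d$ over the connected components $T$ of $\{\text{open}\}\setminus W_\omega$, together with zero on the remaining vertices. Zero lies below $E$, so the zero part causes no trouble. Each component $T$ lies inside a single open cluster $C$. Any $v\in T$ is not in $W_\omega$, so $|T\cap B(v,r)|\le |C_\omega(v)\cap B(v,r)|<ab^r$. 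Hence $T$ satisfies the sparse-ball hypothesis of Proposition~\ref{prop:dense-ball}, which gives $\sup\sigma(P_T)<\rho_d(1-\delta)$. Since $T$ is bipartite, $\sigma(P_T)$ is symmetric, so also $\inf\sigma(P_T)>-\rho_d(1-\delta)$.

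The main technical point is that there may be infinitely many components, so the direct sum needs a uniform spectral bound strictly below $E$. A supremum of strict inequalities might otherwise reach $E$ and put mass at $E$ itself. To avoid this, I would run Proposition~\ref{prop:dense-ball} with a slightly larger $\delta'=\delta(1+\epsilon)$ and the same $r$. Its proof in fact yields the explicit bound $w(S)\le 1-\gamma/r^2$ with $\gamma$ strictly larger than $(\pi/\sqrt2-\eta')^2$. This gives a uniform bound $\sup\sigma(P_T)\le\rho_d(1-\gamma/r^2)<\rho_d(1-\delta)$ for every component, so $\one_{[E,\infty)}$ vanishes on the direct sum. Lemma~\ref{lem:witness-density} then gives
\[
        \nu_d\bigl(\{|\lambda|\ge\rho_d(1-\delta)\}\bigr)\le 2\exp(-c b^{r}).
\]

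Finally we substitute $r=\lfloor(\pi/\sqrt2-\eta')/\sqrt\delta\rfloor\ge (\pi/\sqrt2-\eta')/\sqrt\delta-1$. This gives $b^r\ge b^{-1}\exp\bigl((\kappa_d-\eta/2)/\sqrt\delta\bigr)$. For $\delta$ small, the factors $b^{-1}$ and $2$ and the constant $c$ are absorbed by lowering the exponent from $\kappa_d-\eta/2$ to $\kappa_d-\eta$ (or by adjusting $c$). Shrinking $\delta_0$ so that $r_\delta\ge r_0$ completes the plan. The step I expect to need the most care is the passage from a single subtree to the whole compressed operator: we need equivariance and measurability of $W_\omega$, and a uniform rather than merely strict spectral bound across components.
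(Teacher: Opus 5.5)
Your proposal is correct and follows the paper's proof essentially step for step: the same witness set $W$, the sparse-ball Proposition~\ref{prop:dense-ball} applied to each component of the open set minus $W$, bipartite symmetry for the negative edge, Lemma~\ref{lem:witness-density}, then Lemma~\ref{lem:dense-rare} and the same $r_\delta\log b$ bookkeeping. The uniformity issue you flag does not actually arise, because $\one_{[E,\infty)}(\bigoplus_T P_T)=\bigoplus_T\one_{[E,\infty)}(P_T)$ for an orthogonal direct sum, so strict bounds on each component already give zero; the uniform bound $\rho_d(1-\gamma/r^2)$ you extract from the proof of Proposition~\ref{prop:dense-ball} is valid but not needed, and the phrase ``same $r$ with $\delta'=\delta(1+\epsilon)$'' should be dropped, since that proposition ties $r$ to $\delta$.
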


\begin{proof}
It suffices to prove the estimate for \(0<\eta<\kappa_d\). Apply Proposition \ref{prop:dense-ball} with parameter $\frac{\eta}{4\log b}.$
Let $a\in(0,1)$ be the corresponding density constant. For sufficiently small $\delta$, put
\[
        r_\delta=
        \left\lfloor
        \frac{\pi/\sqrt2-\eta/(4\log b)}{\sqrt\delta}
        \right\rfloor .
\]
Define the witness set
\[
        W_\delta(\omega)
        =
        \left\{
        v:
        |C_\omega(v)\cap B(v,r_\delta)|\ge a b^{r_\delta}
        \right\},
\]
and let $Q_\delta$ be multiplication by $\one_{W_\delta}$.

After deleting $W_\delta$, every remaining open component $T$ satisfies
\[
        \sup_{v\in T}|T\cap B(v,r_\delta)|<a b^{r_\delta}.
\]
After deleting \(W_\delta\), the operator
\((1-Q_\delta)P_\omega(1-Q_\delta)\) is the orthogonal direct sum of the
operators \(P_T\) over the remaining open components \(T\). By Proposition \ref{prop:dense-ball},
each such connected component $T'$ satisfies
\[
        \sup\sigma(P_T')<E,
        \qquad E=\rho_d(1-\delta).
\]
Hence
\[
        1_{[E,\infty)}
        \bigl((1-Q_\delta)P_\omega(1-Q_\delta)\bigr)=0.
\]
Moreover, every open subgraph of $T_d$ is bipartite. Multiplication by $+1$ on one bipartition class and by $-1$ on the other conjugates the adjacency operator to its negative. Hence the same equality holds with $P_\omega$ replaced by $-P_\omega$.

Apply Lemma \ref{lem:witness-density} with
\[
        E=\rho_d(1-\delta).
\]
Using the definition of $\nu_d$ from (\ref{eq: def of nu}), we get
\begin{equation}\label{eq: 37}
            \nu_d\{\lambda:|\lambda|\ge\rho_d(1-\delta)\}
        =
        \tau\bigl(\one_{(-\infty,-E]\cup[E,\infty)}(P_\omega)\bigr) \le
        2\Prob(o\in W_\delta).
\end{equation}
By Lemma \ref{lem:dense-rare},
\begin{equation}\label{eq: 38}
        \Prob(o\in W_\delta)
        \le
        \exp(-c_1 b^{r_\delta})   
\end{equation}
for all sufficiently small $\delta$.

Since
\[
        r_\delta
        =
        \left(\frac\pi{\sqrt2}-\frac{\eta}{4\log b}\right)\delta^{-1/2}+O(1),
\]
we have
\begin{equation}\label{eq: 39}
         \log(b^{r_\delta})
        =
        r_\delta\log b
        \ge
        \frac{\kappa_d-\eta}{\sqrt\delta}
\end{equation}
for all sufficiently small $\delta$. Combining (\ref{eq: 37})--(\ref{eq: 39}), and reducing the constant to absorb the factor $2$, proves the proposition.
\end{proof}


We are now ready to prove Theorem \ref{thm:main}. As the lower bound has been established in Proposition \ref{prop:lower}, it suffices to establish the upper bound. 

\begin{proof}[Proof of Theorem \ref{thm:main}]
Recall
\[
        \kappa_d=\frac\pi{\sqrt2}\log b.
\]
Fix $0<\eta<\kappa_d/4$, and set
\[
        \delta_n=
        \left(\frac{\kappa_d-2\eta}{\log n}\right)^2.
\]
Since $\delta_n\to0$, we assume throughout the rest of the proof that $n$ is large enough so that
\[
        0<\delta_n<\delta_0(d,\eta),
\]
where $\delta_0(d,\eta)$ is the smallness threshold from Proposition \ref{prop:edge-tail}.

Recall from (\ref{eq: return_spectrum}) that
\[
        p_{2n}(e,e)
        =
        \int |\lambda|^{2n}\,d\nu_d(\lambda).
\]
Split the integral into
\[
        I_1=
        \int_{\{|\lambda|\le\rho_d(1-\delta_n)\}}
        |\lambda|^{2n}\,d\nu_d(\lambda),
\]
and
\[
        I_2=
        \int_{\{|\lambda|>\rho_d(1-\delta_n)\}}
        |\lambda|^{2n}\,d\nu_d(\lambda).
\]
For the first term,
\begin{equation}\label{eq: 41}
    \begin{split}
            I_1
        &\le
        \rho_d^{2n}(1-\delta_n)^{2n} \\
        &\le
        \rho_d^{2n}\exp(-2n\delta_n) \\
        &=
        \rho_d^{2n}
        \exp\left[-2(\kappa_d-2\eta)^2\frac{n}{\log^2 n}\right].
    \end{split}
\end{equation}

For the second term, using the support condition $\supp\nu_d\subseteq[-\rho_d,\rho_d]$ and Proposition \ref{prop:edge-tail} with $\delta=\delta_n$, we get
\[
\begin{aligned}
        I_2\le
        \rho_d^{2n}
        \nu_d\{|\lambda|\ge\rho_d(1-\delta_n)\} 
        \le
        \rho_d^{2n}
        \exp\left[-c\exp\left(\frac{\kappa_d-\eta}{\sqrt{\delta_n}}\right)\right].
\end{aligned}
\]
Since
\[
        \sqrt{\delta_n}
        =
        \frac{\kappa_d-2\eta}{\log n},
\]
we have
\[
        \exp\left(\frac{\kappa_d-\eta}{\sqrt{\delta_n}}\right)
        =
        n^{(\kappa_d-\eta)/(\kappa_d-2\eta)}.
\]
The exponent
\[
        \frac{\kappa_d-\eta}{\kappa_d-2\eta}>1,
\]
so $I_2$ is smaller than
\[
        \rho_d^{2n}\exp\left[-M\frac{n}{\log^2 n}\right]
\]
for every fixed $M>0$, once $n$ is sufficiently large.

Combining this with (\ref{eq: 41}),
\[
        p_{2n}(e,e)
        \le
        \rho_d^{2n}
        \exp\left[-\bigl(2(\kappa_d-2\eta)^2-o(1)\bigr)
        \frac{n}{\log^2 n}\right].
\]
Therefore, for every fixed \(0<\eta<\kappa_d\),
\[
        \liminf_{n\to\infty}
        \frac{-\log(p_{2n}(e,e)/\rho_d^{2n})}{n/\log^2 n}
        \ge
        2(\kappa_d-2\eta)^2.
\]
Letting \(\eta\downarrow0\), and using
\[
        2\kappa_d^2
        =
        2\left(\frac{\pi}{\sqrt2}\log b\right)^2
        =
        \pi^2(\log b)^2,
\]
gives
\[
        \liminf_{n\to\infty}
        \frac{-\log(p_{2n}(e,e)/\rho_d^{2n})}{n/\log^2 n}
        \ge
        \pi^2(\log b)^2.
\]
Together with Proposition~\ref{prop:lower}, this proves Theorem~1.

\end{proof}


\begin{thebibliography}{9}

\bibitem{QED}
C. An, Q. Ye, M. Pan, and J. Zhang,
\emph{QED: An open-source multi-agent system for generating mathematical proofs on open problems},
arXiv:2604.24021 (2026).

\bibitem{DicksSchick2002}
W. Dicks and T. Schick,
\emph{The spectral measure of certain elements of the complex group ring of a wreath product},
Geometriae Dedicata \textbf{93} (2002), 121--137.

\bibitem{EGR}
W. Ehm, T. Gneiting, and D. Richards,
\emph{Convolution roots of radial positive definite functions with compact support},
Transactions of the American Mathematical Society \textbf{356} (2004), 4655--4685.

\bibitem{GrigorchukZuk2001}
R. I. Grigorchuk and A. \.Zuk,
\emph{The lamplighter group as a group generated by a 2-state automaton, and its spectrum},
Geometriae Dedicata \textbf{87} (2001), 209--244.

\bibitem{KhorunzhiyKirschMueller2006}
O. Khorunzhiy, W. Kirsch, and P. Müller,
\emph{Lifshitz tails for spectra of Erdős--Rényi random graphs},
The Annals of Applied Probability \textbf{16} (2006), 295--309.

\bibitem{KirschMueller2006}
W. Kirsch and P. Müller,
\emph{Spectral properties of the Laplacian on bond-percolation graphs},
Mathematische Zeitschrift \textbf{252} (2006), 899--916.

\bibitem{LNWW}
F. Lehner, M. Neuhauser, and W. Woess,
\emph{On the spectrum of lamplighter groups and percolation clusters},
Mathematische Annalen \textbf{342} (2008), 69--89.


\bibitem{MuellerStollmann2007}
P. Müller and P. Stollmann,
\emph{Spectral asymptotics of the Laplacian on supercritical bond-percolation graphs},
Journal of Functional Analysis \textbf{252} (2007), 233--246.



\end{thebibliography}
\end{document}